
\documentclass{amsart}
\usepackage{amssymb}
\usepackage{amsmath}
\usepackage{amsthm}
\usepackage[all]{xy}
\usepackage{eucal}
\usepackage{color}

\SelectTips{cm}{}
\theoremstyle{plain}
\newtheorem{thm}{Theorem}[section]

\newtheorem{lem}[thm]{Lemma}
\newtheorem{prop}[thm]{Proposition}

\theoremstyle{definition}
\newtheorem{defn}[thm]{Definition}

\theoremstyle{remark}
\newtheorem{rem}[thm]{Remark}

\newcommand{\Z}{{\mathbb Z}}

\newcommand{\M}{\mathcal{M}}
\newcommand{\N}{\mathcal{N}}
\newcommand{\V}{\mathcal{V}}

\newcommand{\sSets}{{\rm sSets}}
\newcommand{\Sp}{{\rm Sp}^{\Sigma}}

\newcommand{\Coll}{\mathcal{C}{\rm{oll}}}
\newcommand{\Oper}{{\mathcal{O}\rm{per}}}
\newcommand{\Hom}{\mathop{\textrm{\rm Hom}}}
\newcommand{\Map}{\mathop{\textrm{\rm Map}}}
\newcommand{\map}{\mathop{\textrm{\rm map}}}
\newcommand{\End}{\mathop{\textrm{\rm End}}}

\numberwithin{equation}{section}

\begin{document}

\title[Transfer of algebras over operads along Quillen adjunctions]{Transfer of algebras over operads along \\ derived Quillen adjunctions}
\author[J.\ J.\ Guti\'errez]{Javier J.\ Guti\'errez}
\thanks{The author was supported by the MEC-FEDER grant MTM2010-15831, and by the
Generalitat de Catalunya as a member of the team 2009~SGR~119.}
\address{Departament d'\`Algebra i Geometria, Universitat de Barcelona.
Gran Via de les Corts Catalanes, 585, 08007 Barcelona, Spain}
\email{javier.gutierrez@ub.edu}

\keywords{Coloured operad; localization; colocalization; ring
spectrum; module spectrum; $A_{\infty}$ structure; $E_{\infty}$
structure} \subjclass[2000]{Primary: 55P43; Secondary: 18D50,
55P60}

\begin{abstract}
Let $\V$ be a cofibrantly generated monoidal model category and
let $\M$ be a monoidal $\V$-model category. Given a cofibrant
$C$-coloured operad $\mathcal{P}$ in $\V$, we give sufficient
conditions for the fibrant replacement and cofibrant replacement
functors in $\M^C$ to preserve $\mathcal{P}$-algebra structures.
In particular, we show how $\mathcal{P}$-algebra structures can be
transferred along derived Quillen adjunctions of monoidal
$\V$-model categories, and we apply this result to the Quillen
adjunctions defined by enriched Bousfield localizations and
colocalizations on $\M$. As an application, we prove that in the
category of symmetric spectra the $n$\nobreakdash-connective cover
functor preserves $A_{\infty}$ and $E_{\infty}$ module spectra
over connective ring spectra, for every $n\in \Z$.
\end{abstract}
\maketitle

\section{Introduction}

A systematic study about the preservation of algebras over coloured operads in simplicial monoidal model categories was undertaken in \cite{CGMV10}, where sufficient conditions were given for a homotopical localization functor to preserve algebras over cofibrant operads. The examples included $A_{\infty}$ and $E_{\infty}$ structures in the category of topological spaces and in the category of symmetric spectra, but also encompassed a number of previous results about preservation of structures under localizations, such as homotopy associative $H$-spaces, loop spaces or infinite loop spaces \cite{Bou96}, \cite{Far96}, and homotopy ring spectra and homotopy module spectra~\cite{CG05}. Similar techniques have been used recently to prove that the slice functor in motivic stable homotopy preserves motivic $E_{\infty}$ ring spectra and motivic $E_{\infty}$~modules~\cite{GRSO10}.

In this paper, we study how the classes of algebras over cofibrant
operads can be transferred along derived Quillen adjunctions of
monoidal $\mathcal{V}$-categories, generalizing the results of
\cite{CGMV10} in two ways. First, we extend the framework to
monoidal $\mathcal{V}$-categories, where $\mathcal{V}$ is a closed
symmetric monoidal category. These are categories enriched,
tensored and cotensored over $\mathcal{V}$ and with a monoidal
structure compatible with the one in $\mathcal{V}$ in a suitable
way. And second, we consider more general transformations, namely
monads and comonads associated to derived Quillen adjunctions of
monoidal $\mathcal{V}$-model categories. Enriched Bousfield
localizations and enriched Bousfield colocalizations, as defined
in~\cite{Bar10}, are particular instances of this situation.

We will make use of the existence of a cofibrantly generated \emph{semi} model structure on the category of $C$-coloured operads in any cofibrantly generated monoidal model category $\mathcal{V}$, transferred from the model structure on the category of $C$\nobreakdash-coloured collections in $\mathcal{V}$ along the free-forgetful adjunction. Thus, fibrations and weak equivalences of operads are defined at the level of the underlying collections. Semi model structures appeared in \cite{Hov98} for the study of categories of algebras over commutative monoids, and were firstly used for categories of operads in \cite{Spi}. In a semi model structure all the axioms for a model structure are satisfied, except for the lifting and factorization axiom which are required to hold only for morphisms with cofibrant domain.

We prove the following statement. Let $\mathcal{P}$ be a cofibrant operad in a cofibrantly generated monoidal model category $\mathcal{V}$. Let $\mathcal{M}$ and $\mathcal{N}$ be monoidal $\mathcal{V}$-model categories  with cofibrant units, and $F\colon\M\rightleftarrows\N\colon U$ a monoidal Quillen $\V$-adjunction. Let
$$
\xymatrix{
\mathbb{L}F\colon {\rm Ho}(\M) \ar@<3pt>[r] & \ar@<2pt>[l] {\rm Ho}(\N)\colon \mathbb{R}U
}
$$
denote the derived adjunction. If $X$ is a $\mathcal{P}$-algebra with $X$ cofibrant in $\mathcal{M}$, then
$\mathbb{R}U\mathbb{L}FX$ admits a homotopy unique $\mathcal{P}$-algebra structure such that the unit of the derived adjunction $X\rightarrow
\mathbb{R}U\mathbb{L}FX$ is a map of $\mathcal{P}$-algebras.
Dually, if $X$ is a $\mathcal{P}$\nobreakdash-algebra with $X$ fibrant in $\mathcal{M}$, then $\mathbb{L}F\mathbb{R}UX$ admits a homotopy unique $\mathcal{P}$\nobreakdash-algebra structure such that the counit of the derived adjunction $\mathbb{L}F\mathbb{R}UX\rightarrow X$ is a map of $\mathcal{P}$\nobreakdash-algebras.
See Theorem~\ref{der_fib} and Theorem~\ref{der_cof} below for a
more general variant of these statements.

Our theorems specialize to the following situation. If, in the
above adjunction, $\mathcal{N}=\mathcal{M}_{\mathcal{L}}$ is the
enriched localized model structure with respect to a set of maps
$\mathcal{L}$, then the identity functors form a Quillen
adjunction and the unit of the derived adjunction is precisely the
$\mathcal{L}$-localization map. Similarly if
$\mathcal{M}=\mathcal{N}^{\mathcal{K}}$ is the enriched
colocalized model structure with respect to a set of objects
$\mathcal{K}$, then the identity functors form a Quillen
adjunction and the counit of the derived adjunction is the
$\mathcal{K}$-colocalization map. Thus, we give sufficient
conditions for localizations and colocalizations to preserve
$\mathcal{P}$-algebra structures. In the case of localization
functors, these results improve the ones in \cite[\S6]{CGMV10}.

Since we are dealing with algebras over $C$-coloured operads, sometimes we will need to apply our localization or colocalization functors on some subset of the components. For this purpose, we introduce \emph{ideals} and \emph{coideals} on the set of colours of a given $C$-coloured operad. These notions will be useful for discussing colocalization of modules over monoids, for example. See Theorem~\ref{mainthm_ideal} and Theorem~\ref{mainthm_coideal} for precise statements.

We illustrate our results with the following example. For a given $n\in\Z$, let $K_{n}$ denote the $(n-1)$-connective cover functor in the category of symmetric spectra, i.e., the enriched Bousfield colocalization functor on the category of symmetric spectra enriched over simplicial sets with respect to $\Sigma^{n}S^0$, where $S^0$ denotes the sphere spectrum. We prove that if $R$ is an $E_{\infty}$ ring and $M$ is an $E_{\infty}$ module with $R$ and $M$ fibrant as spectra, then the $(n-1)$-connective cover $K_{n} M$ has a homotopy unique $E_{\infty}$ module structure over the $E_{\infty}$ ring $K_0R$ such that the colocalization map is a map of $E_{\infty}$ modules. The
same statement is true for $A_{\infty}$ modules over $A_{\infty}$ rings.

\medskip
\noindent\textbf{Acknowledgements}. I would like to thank Denis-Charles Cisinski for sharing with me some of his ideas on the topic of this paper and for many helpful discussions.

\section{Background on model categories}
In this section, we recall the definition of a derived Quillen
adjunction between two model categories and the basic notions of
enriched category theory in the context of model categories; see
\cite[\S4]{Hov99} for details.

A \emph{model category} is a category $\mathcal{M}$ with a model
structure as in~\cite{Qui67}, \cite{Hov99} or~\cite{Hir03}. We
will assume the existence of functorial factorizations as part of the
definition of a model category. Thus, every model category $\M$
comes equipped with a \emph{cofibrant replacement functor}
$Q\colon\M\rightarrow\M$ together with a natural
transformation $i\colon Q\rightarrow  {\rm Id}$ such that
$i_X\colon QX\rightarrow X$ is a trivial fibration and $QX$ is
cofibrant for every $X$ in $\M$. Dually, there exist a
\emph{fibrant replacement functor} $R\colon\M\rightarrow\M$
together with a natural transformation $j\colon{\rm
Id}\rightarrow R$ such that $j_X\colon X\rightarrow RX$ is
a trivial cofibration and $RX$ is fibrant for every $X$ in $\M$.

The \emph{homotopy category} of a model category $\M$ will be denoted by ${\rm Ho}(\M)$ and is the category obtained by inverting the weak equivalences in $\M$. It is equivalent to the quotient category of the fibrant and cofibrant objects modulo the homotopy relation.

\subsection{Quillen functors}
An adjoint pair $F\colon
\mathcal{M}\leftrightarrows\mathcal{N}\colon U$ between model
categories is called a \emph{Quillen adjunction} if $F$ preserves
cofibrations and trivial cofibrations or, equivalently, if $U$
preserves fibrations and trivial fibrations. The left adjoint of a
Quillen adjunction is called a \emph{left Quillen functor} and the
right adjoint is called a \emph{right Quillen functor}.

By Ken Brown's lemma~\cite[Lemma 1.1.12]{Hov99}, every left Quillen functor sends weak equivalences between cofibrant objects to weak equivalences and, dually, every right Quillen functor sends weak equivalences between fibrant object to weak equivalences.

If $F\colon \M\rightarrow \N$ is a left Quillen functor, then the \emph{left derived functor}
$$
\mathbb{L}F\colon {\rm Ho}(\M)\longrightarrow {\rm Ho}(\N)
$$
is defined by $\mathbb{L}F=F\circ Q$, where $Q$ denotes the
cofibrant replacement functor in~$\M$. Similarly, if $U\colon
\N\rightarrow \M$ is a right Quillen functor, then the \emph{right
derived functor}
$$
\mathbb{R}U\colon {\rm Ho}(\N)\longrightarrow {\rm Ho}(\M)
$$
is defined as $\mathbb{R}U=U\circ R$, where $R$ denotes the fibrant replacement functor.

Every Quillen adjunction $F\colon \M\leftrightarrows\N\colon U$ induces a \emph{derived adjunction} between the homotopy categories
$$
\xymatrix{
\mathbb{L}F\colon {\rm Ho}(\M)\ar@<3pt>[r] & \ar@<2pt>[l] {\rm Ho}(\N)\colon \mathbb{R}U
}
$$

\subsection{Enriched monoidal model categories}
\label{enrichedmonmod}
Given two model categories $\V$ and $\M$, a \emph{Quillen adjunction of two variables} is a triple of bifunctors $-\odot -\colon \V\times\M\rightarrow \M$, $\Hom(-,-)\colon \M^{\rm op}\times \M\rightarrow \V$ and $(-)^{(-)}\colon \V^{\rm op}\times M\longrightarrow \M$ together with natural isomorphisms
$$
\V(A, \Hom(X, Y))\cong \M(A\odot X, Y)\cong \M(X, Y^A)
$$
for every $X$ and $Y$ in $\M$ and every $A$ in $\V$, such that if $f\colon A\rightarrow B$ is a cofibration in $\V$ and $g\colon X\rightarrow Y$ is a cofibration in $\M$, then the \emph{pushout-product} of $f$ and $g$
\begin{equation}
(A\odot Y)\coprod_{A\odot X} (B\odot X)\longrightarrow B\odot Y
\label{pushproduct}
\end{equation}
is a cofibration that is trivial if either $f$ or $g$ is trivial. By adjointness, condition~(\ref{pushproduct}) is equivalent to
the following analog of Quillen's (SM7) axiom for simplicial model categories~\cite[II.\S2]{Qui67}: If $f\colon X\rightarrow Y$ is a cofibration in $\M$ and $g\colon W\rightarrow Z$ is a fibration in $\M$, then the morphism
\begin{equation}
\Hom(Y,W)\longrightarrow\Hom(Y,Z)\times_{\Hom(X,Z)}\Hom(X, W)
\label{enrichedSM7}
\end{equation}
is a fibration in $\V$ that is trivial if either $f$ or $g$ is trivial.

Observe that if $A$ is cofibrant in $\V$, then the functor $A\odot
-$ is a left Quillen functor, with right adjoint $(-)^A$. Also, if
$X$ is cofibrant in $\M$, then $-\odot X$ is a left Quillen
functor with right adjoint $\Hom(X,-)$.

A \emph{monoidal model category} is a closed symmetric monoidal category $\V$ with tensor product $\otimes$, unit object $I$ and internal hom denoted by $\Hom_{\V}$ together with a model structure that satisfies the following two axioms:
\begin{itemize}
\item[(i)] \emph{Pushout-product axiom}. The functors $-\otimes-$, $\Hom_{\V}(-,-)$ and $\Hom_{\V}(-,-)$ form a Quillen adjunction of two variables.
\item[(ii)] \emph{Unit axiom}. The natural map
$$
q\otimes 1\colon \widetilde{I}\otimes A \longrightarrow I\otimes A
$$
is a weak equivalence in $\V$ for every cofibrant $A$, where $q\colon \widetilde{I}\rightarrow I$
denotes any cofibrant approximation of the unit of~$\V$. (A \emph{cofibrant approximation} to $X$ is a cofibrant object $\widetilde X$ and a weak equivalence $\widetilde{X}\rightarrow X$.)
\end{itemize}

A direct consequence of the pushout-product axiom is that the
tensor product of two cofibrations with cofibrant domains is again
a cofibration. Note that the unit axiom holds trivially if the
unit of $\V$ is cofibrant.

A functor $F\colon \V\rightarrow \V'$ between symmetric monoidal categories is called \emph{symmetric monoidal} if it is equipped with a unit $\alpha\colon I_{\V'}\rightarrow F(I_{\V})$ and a binatural transformation $m\colon F(-)\otimes_{\V'} F(-)\rightarrow F(-\otimes_{\V} -)$ satisfying the usual associativity, symmetry and unit conditions. A symmetric monoidal functor is called  \emph{strong} if the structure maps are isomorphisms.

Given two (strong) symmetric monoidal functors $F$ and
$F'\colon\V\rightarrow \V'$, a \emph{symmetric monoidal natural
transformation} is a natural transformation $\tau\colon
F\rightarrow F'$ satisfying $\alpha_{F'}=\tau_{I_{\V}}\circ
\alpha_{F}$, and such that the diagram
$$
\xymatrix{
FX\otimes_{\V'} FY\ar[r]^{m_{F}}\ar[d]_{\tau_X\otimes\tau_Y} & F(X\otimes_{\V} Y)\ar[d]^{\tau_{X\otimes Y}} \\
F'X\otimes_{\V'} F'Y\ar[r]_{m_{F'}} & F'(X\otimes_{\V} Y) }
$$
commutes for every $X$ and $Y$ in $\V$.

Let $\V$ and $\V'$ be two monoidal model categories. Recall from
\cite[Definition~4.2.16]{Hov99} that a Quillen adjunction $F\colon
\V\rightleftarrows \V'\colon U$ is called a \emph{monoidal Quillen
adjunction} if the left adjoint $F$ is strong symmetric monoidal,
and for any cofibrant approximation of the unit
$\widetilde{I}_{\V}\rightarrow I_{\V}$, the map $
F(\widetilde{I}_{\V})\rightarrow F(I_{\V})\cong I_{\V'}$ is a weak
equivalence. Note that in this case the right adjoint $U$ is
automatically symmetric monoidal, since $F$ is strong symmetric
monoidal.

\begin{defn}
Let $\V$ be a monoidal model category. A \emph{$\V$-model
category} is a category $\M$ enriched, tensored and cotensored
over $\V$ together with a model structure such that the tensor,
enrichment and cotensor define a Quillen adjunction of two
variables, and for every cofibrant object $X$ in $\M$, the map
$$
\widetilde{I}\otimes X\longrightarrow I\otimes X
$$
is a weak equivalence in $\M$, where $\widetilde{I}\rightarrow I$ denotes any cofibrant approximation of the unit.
\end{defn}

\begin{defn}
Let $\V$ be a monoidal model category. A \emph{monoidal $\V$-model
category} is a monoidal model category $\M$ together with a
monoidal Quillen adjunction $i\colon \V\rightleftarrows \M\colon
r$.
\end{defn}

Every monoidal model category $\V$ is itself a $\V$-model category
with the tensor product and the internal hom of $\V$, and it is
also a monoidal $\V$-model category trivially by taking $F=U={\rm
Id}$.

The following result is a direct consequence of the pushout-product axiom in $\V$ and in $\M$, and the fact that $i$ is a left Quillen functor and $r$ a right Quillen functor.
\begin{prop}
Let $\M$ be a monoidal $\V$-model category with associated monoidal Quillen adjunction $i\colon \V\rightleftarrows \M\colon r$. Then, the functors $i(-)\otimes -$, $r(\Hom_{\M}(-,-))$ and $\Hom_{\M}(i(-),-)$ form a Quillen adjunction of two variables. $\hfill\qed$
\end{prop}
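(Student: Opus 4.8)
The plan is to check directly the two ingredients in the definition of a Quillen adjunction of two variables recalled in Section~\ref{enrichedmonmod}: the adjunction isomorphisms, and the pushout-product condition~(\ref{pushproduct}) (equivalently, its (SM7)-type reformulation~(\ref{enrichedSM7})).

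First I would produce the adjunction isomorphisms by stacking the adjunction $i\dashv r$ on top of the internal-hom adjunction of the closed monoidal category $\M$: for $A$ in $\V$ and $X,Y$ in $\M$,
$$
\V(A,r\Hom_{\M}(X,Y))\;\cong\;\M(i(A),\Hom_{\M}(X,Y))\;\cong\;\M(i(A)\otimes X,Y)\;\cong\;\M(X,\Hom_{\M}(i(A),Y)),
$$
where the first isomorphism is the $i\dashv r$ adjunction and the last two are instances of the closed monoidal structure of $\M$ applied to the object $i(A)$. Since these are natural in $A$, $X$ and $Y$, the three bifunctors $i(-)\otimes-$, $r(\Hom_{\M}(-,-))$ and $\Hom_{\M}(i(-),-)$ form an adjunction of two variables in the purely categorical sense, with $i(-)\otimes-$ playing the role of $\odot$.

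Next I would verify the pushout-product axiom. Let $f\colon A\to B$ be a cofibration in $\V$ and $g\colon X\to Y$ a cofibration in $\M$. Since $i$ is a left Quillen functor, $i(f)\colon i(A)\to i(B)$ is a cofibration in $\M$, and it is a trivial cofibration whenever $f$ is trivial. The commutative square obtained by applying the bifunctor $i(-)\otimes-$ to $f$ and $g$ is literally the square obtained by applying the monoidal product $\otimes$ of $\M$ to $i(f)$ and $g$; hence the pushout-product of $f$ and $g$ relative to $i(-)\otimes-$ coincides with the pushout-product
$$
(i(A)\otimes Y)\coprod_{i(A)\otimes X}(i(B)\otimes X)\longrightarrow i(B)\otimes Y
$$
of $i(f)$ and $g$ relative to $\otimes$. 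By the pushout-product axiom for $\M$ as a monoidal model category this last map is a cofibration in $\M$, trivial as soon as $i(f)$ or $g$ is trivial; since $i(f)$ is trivial whenever $f$ is, the pushout-product of $f$ and $g$ is a cofibration in $\M$, trivial whenever $f$ or $g$ is trivial. Together with the adjunction isomorphisms, this is precisely the assertion that the three functors form a Quillen adjunction of two variables.

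I do not expect a genuine obstacle here: the whole content is that $i$ is left Quillen, so it carries (trivial) cofibrations of $\V$ to (trivial) cofibrations of $\M$, that $\M$ itself satisfies the pushout-product axiom, and that the adjunction $i\dashv r$ supplies the enriched hom into $\V$. The only point that deserves a line of care is that no cofibrancy hypothesis on $A$ or $B$ is required, since the pushout-product axiom for $\M$ only asks that $i(f)$ be a cofibration. Alternatively, one can argue through the reformulation~(\ref{enrichedSM7}): for a cofibration $X\to Y$ and a fibration $W\to Z$ in $\M$, the pushout-product axiom in $\M$ makes $\Hom_{\M}(Y,W)\to\Hom_{\M}(Y,Z)\times_{\Hom_{\M}(X,Z)}\Hom_{\M}(X,W)$ a fibration in $\M$, and applying the right Quillen functor $r$ yields a fibration in $\V$ with the required triviality, matching the bifunctor $r(\Hom_{\M}(-,-))$ via the isomorphisms above.
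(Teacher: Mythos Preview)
Your proposal is correct and is precisely the argument the paper has in mind: the paper gives no proof beyond noting that the result is a direct consequence of the pushout-product axiom in $\M$ together with $i$ being left Quillen and $r$ right Quillen, and your write-up is exactly a fleshing out of that sentence. Both the pushout-product verification (via $i$ sending (trivial) cofibrations to (trivial) cofibrations and then invoking the pushout-product axiom of $\M$) and the dual (SM7)-style verification (using that $r$ preserves pullbacks and (trivial) fibrations) match the paper's indicated approach.
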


A \emph{monoidal Quillen $\V$-adjunction} between two monoidal
$\V$-model categories $\M$ and $\N$ is a monoidal Quillen
adjunction $F\colon\M\rightleftarrows \N\colon U$ that respects
the action of $\V$, hence $F$ is a strong symmetric monoidal
functor together with a monoidal natural isomorphism $\rho\colon
F\circ i_{\M}\rightarrow i_{\N}$.

Throughout the rest of the paper, given a monoidal $\V$-model
category $\M$ and objects $X$ and $Y$ in $\M$, and $A$ in $\V$, we
will denote by $A\otimes X$ the tensor $F(A)\otimes X$, and by
$\Hom(X,Y)$ the enrichment $U(\Hom_{\M}(X,Y))$.

\section{Coloured operads and algebras in enriched model categories}

Coloured operads can be defined in any closed symmetric monoidal category $\V$ and its algebras have sense in any monoidal $\V$-category $\M$. In the first part of this section, we review the terminology and basic properties of coloured operads and its algebras (see \cite{BV73}, \cite[\S2]{EM06}, \cite{Lei04}). In the second part, we describe the (semi) model structure of the category of $C$-coloured operads in a cofibrantly generated monoidal model category.

Throughout this section, $\mathcal{V}$ will denote a cocomplete closed symmetric monoidal category with tensor product $\otimes$ and unit $I$. We will denote by $0$ an initial object of $\mathcal{V}$.

\subsection{Coloured operads and algebras}
 Let $C$ be any set, whose elements will be called \emph{colours}.
A \emph{$C$\nobreakdash-coloured collection} $\mathcal{K}$ in $\mathcal{V}$ consists of objects $\mathcal{K}(c_1,\ldots, c_n;c)$ in $\mathcal{V}$, one  for each $(n+1)$-tuple of
colours $(c_1,\ldots, c_n,c)$ and $n\ge 0$, equipped with a right action of the
symmetric groups, i.e., for each $\alpha\in \Sigma_n$ there are maps
$$
\alpha^*\colon \mathcal{K}(c_1,\ldots, c_n;c)\longrightarrow
\mathcal{K}(c_{\alpha(1)},\ldots, c_{\alpha(n)};c),
$$
where $\Sigma_n$ is the symmetric group on $n$ letters (the group $\Sigma_n$ denotes the trivial group if $n=0$ or $n=1$).

A \emph{morphism} of $C$\nobreakdash-coloured collections $\varphi\colon
\mathcal{K}\longrightarrow \mathcal{L}$ is a family of maps
$$
\varphi_{c_1,\ldots,c_n;c}\colon \mathcal{K}(c_1,\ldots,c_n;c)\longrightarrow
\mathcal{L}(c_1,\ldots, c_n; c)
$$
in $\mathcal{V}$, ranging over all $n\ge 0$ and all $(n+1)$-tuples
$(c_1,\ldots, c_n,c)$, and compatible with the action of the
symmetric groups. The category of $C$\nobreakdash-coloured collections in $\mathcal{V}$
will be denoted by $\Coll_C(\mathcal{V})$.

A \emph{$C$\nobreakdash-coloured operad} $\mathcal{P}$ in $\mathcal{V}$ is a $C$\nobreakdash-coloured
collection equip\-ped with unit maps $I\rightarrow \mathcal{P}(c;c)$ for
every $c\in C$ and, for every $(n+1)$-tuple of colours
$(c_1,\ldots, c_n,c)$ and $n$ given tuples
\[
(a_{1,1},\ldots, a_{1,k_1}, c_1),\ldots, (a_{n,1},\ldots,
a_{n,k_n}, c_n),
\]
a \emph{composition product} map
$$\xymatrix{
\mathcal{P}(c_1,\ldots, c_n;c)\otimes \mathcal{P}(a_{1,1},\ldots,
a_{1,k_1};c_1)\otimes\cdots\otimes \mathcal{P}(a_{n,1},\ldots,
a_{n,k_n};c_n)\ar[d]
\\ \mathcal{P}(a_{1,1},\ldots,a_{1,k_1},a_{2,1},\ldots,a_{2,k_2},\ldots,a_{n,1},\ldots,a_{n,k_n};c),
}
$$
that is compatible with the action of the symmetric groups and subject
to the usual associativity and unitary compatibility relations; see, for
example, \cite[\S 2]{EM06}.

A \emph{morphism of $C$\nobreakdash-coloured operads} is a morphism of the
underlying $C$\nobreakdash-co\-lou\-red collections that is compatible
with the unit maps and the composition product maps.
The category of $C$\nobreakdash-coloured operads in $\mathcal{V}$ will be denoted by ${\Oper}_C(\mathcal{V})$. There is a free-forgetful adjunction
$$
\xymatrix{
F \colon \Coll_C(\mathcal{V}) \ar@<3pt>[r] & \ar@<2pt>[l] \Oper_C(\mathcal{V})\colon U
}
$$
where $U$ is the forgetful functor, and the left adjoint is the
free $C$-coloured operad generated by a collection.

If $C=\{c\}$, then a $C$-coloured operad $\mathcal{P}$ is just an ordinary operad, where one writes $\mathcal{P}(n)$
instead of $\mathcal{P}(c,\ldots,c;c)$ with $n$ inputs, for every $n\ge 0$.

Let $\M$ be a  monoidal $\V$-category. Algebras in $\M$ over coloured operads in $\V$ are defined in the following way. We denote by $\M^C$ the product category of copies of $\M$ indexed by the set of colours $C$, that is, $\M^C=\prod_{c\in C}\M$. For every object $\mathbf{X}=(X(c))_{c\in C}$ in $\M^C$, the \emph{endomorphism coloured operad} $\End({\mathbf{X}})$ of~$\mathbf{X}$ is the $C$-coloured operad in $\V$ defined by
$$
\End(\mathbf{X})(c_1,\ldots, c_n; c)=\Hom(X(c_1)\otimes\cdots\otimes X(c_n),X(c)),
$$
where $X(c_1)\otimes\cdots\otimes X(c_n)$ is meant to be $I$ if $n=0$. The composition product is ordinary
composition and the $\Sigma_n$-action is defined by permutation of the factors.

Let $\mathcal{P}$ be any $C$-coloured operad in $\mathcal{V}$. A \emph{$\mathcal{P}$-algebra} in $\M$ is an object $\mathbf{X}=(X(c))_{c\in C}$ of
$\mathcal{\M}^C$ together with a morphism
$
\mathcal{P}\rightarrow \End(\mathbf{X})
$
of $C$\nobreakdash-coloured operads in $\V$.

A \emph{map of $\mathcal{P}$-algebras} $\mathbf{f}\colon\mathbf{X}\rightarrow \mathbf{Y}$ is a family of maps
$(f_c\colon X(c)\rightarrow Y(c))_{c\in C}$ such that the following diagram of $C$-coloured collections in $\V$
$$
\xymatrix{
\mathcal{P}\ar[r] \ar[d] & \End(\mathbf{X}) \ar[d] \\
\End(\mathbf{Y}) \ar[r]& \Hom(\mathbf{X},\mathbf{Y})
}
$$
commutes, where the top and left arrows are the given $\mathcal{P}$-algebra structures on $\mathbf{X}$ and $\mathbf{Y}$ respectively, the $C$-coloured collection $\Hom(\mathbf{X},\mathbf{Y})$ is defined as
$$
\Hom(\mathbf{X},\mathbf{Y})(c_1,\ldots, c_n;c)=\Hom(X(c_1)\otimes\cdots\otimes X(c_n), Y(c)),
$$
and the right and bottom maps are induced by the maps $f_c$.

If the category $\mathcal{V}$ has pullbacks, then a map $\mathbf{f}$ of $\mathcal{P}$-algebras can be seen as a map of $C$-coloured operads
$$
\mathcal{P}\longrightarrow \End(\mathbf{f}),
$$
where the $C$-coloured operad $\End(\mathbf{f})$ is obtained as the pullback of the diagram of $C$-coloured collections
in $\V$
\begin{equation}
\xymatrix{
\End(\mathbf{f})\ar@{.>}[r] \ar@{.>}[d] & \End(\mathbf{X}) \ar[d] \\
\End(\mathbf{Y}) \ar[r]& \Hom(\mathbf{X},\mathbf{Y}).
}
\label{end_f}
\end{equation}
Note that $\End(\mathbf{f})$ inherits indeed a $C$-coloured operad structure from the $C$-coloured operads $\End(\mathbf{X})$ and $\End(\mathbf{Y})$.

Given a $C$-coloured operad $\mathcal{P}$ and an object $\mathbf{X}=(X(c))_{c\in C}$ in $\M^C$, the
\emph{restricted endomorphism operad} $\End_{\mathcal{P}}(\mathbf{X})$ is defined as follows:
\begin{equation}
\End\nolimits_{\mathcal{P}}(\mathbf{X})(c_1,\ldots, c_n; c)=
\left\{
\begin{array}{ll}
\End(\mathbf{X})(c_1,\ldots, c_n;c) & \mbox{if }\mathcal{P}(c_1,\ldots, c_n;c)\ne 0. \\[0.2cm]
0 & \mbox{otherwise}.
\end{array}
\right.
\label{endp}
\end{equation}
There is a canonical inclusion of $C$-coloured operads
$\End_{\mathcal{P}} (\mathbf{X})\rightarrow \End(\mathbf{X})$,
and thus every morphism $\mathcal{P}\rightarrow
\End(\mathbf{X})$ of $C$-coloured operad factors uniquely through
the restricted endomorphism operad
$\End_{\mathcal{P}}(\mathbf{X})$. Hence, a $\mathcal{P}$-algebra
structure on $\mathbf{X}$ is, in fact, given by a morphism of
$C$-coloured operads $\mathcal{P}\rightarrow
\End_{\mathcal{P}}(\mathbf{X})$

Similarly, if $\mathbf{X}$ and $\mathbf{Y}$ are objects of $\M^C$ and $\mathcal{P}$ is a $C$-coloured operad in $\V$, we denote by
$\Hom_{\mathcal{P}} (\mathbf{X},\mathbf{Y})$ the $C$-coloured collection defined as
\begin{equation}
\Hom\nolimits_{\mathcal{P}}(\mathbf{X},\mathbf{Y})(c_1,\ldots, c_n; c)=
\left\{
\begin{array}{ll}
\Hom(\mathbf{X},\mathbf{Y})(c_1,\ldots, c_n;c) &\mbox{if }\mathcal{P}(c_1,\ldots, c_n;c)\ne 0,\\[0.2cm]
0 & \mbox{otherwise}, \\
\end{array}
\right.
\label{homp}
\end{equation}
and, for a morphism $\mathbf{f}: \mathbf{X}\rightarrow\mathbf{Y}$ of $\mathcal{P}$-algebras, we denote by $\End_{\mathcal{P}}(\mathbf{f})$ the pullback (when it exists) of the restricted
endomorphism operads of $\mathbf{X}$ and $\mathbf{Y}$ over $\Hom_{\mathcal{P}}(\mathbf{X},\mathbf{Y})$, as in~(\ref{end_f}).

\subsection{The semi model structure for coloured operads}
Let $\V$ be a cofibrantly generated monoidal model category. Then,
the category of $C$-coloured collections $\Coll_C(\V)$ admits a
cofibrantly generated model structure, in which a morphism
$\mathcal{K}\rightarrow\mathcal{L}$ is a fibration or a weak
equivalence if $\mathcal{K}(c_1,\ldots, c_n; c)\rightarrow
\mathcal{L}(c_1,\ldots, c_n; c)$ is a fibration or a weak
equivalence in $\V$, respectively, for any $(n+1)$-tuple of
colours $(c_1,\ldots, c_n,c)$. Under suitable conditions (see
\cite{Cra95}, \cite[Theorem 11.3.2]{Hir03}) this model structure
can be transferred along the free-forgetful adjunction
\begin{equation}
\xymatrix{
F \colon\Coll_C(\mathcal{V}) \ar@<3pt>[r] & \ar@<2pt>[l] \Oper_C(\mathcal{V}) \colon U
}
\label{freeforgetful}
\end{equation}
to define a cofibrantly generated model structure on the category of $C$-coloured operads in which a morphism of $C$-coloured operads is a fibration or a weak equivalence if its underlying morphism of $C$-coloured collections is a fibration or a weak equivalence, respectively. The class of cofibrations is defined as the class of morphisms with the left lifting property with respect to trivial fibrations. These conditions hold if we assume that the unit of $\V$ is cofibrant, the existence of a symmetric monoidal fibrant replacement functor in $\V$ and the existence of an interval with a coassociative and cocommutative comultiplication~\cite[Theorem~2.1]{BM07} (see also \cite[Theorem~1.1]{JY} for a generalization to coloured PROPs).

For example, the categories of simplicial sets, $k$-spaces or
chain complexes over a commutative ring with unit satisfy these
assumptions. However, no symmetric monoidal model category of
spectra can have at the same time a cofibrant unit and a symmetric
monoidal fibrant replacement functor~\cite{Lew91}. This problem
can sometimes be avoided by considering the coloured operad whose
algebras are $C$\nobreakdash-coloured operads for a fixed sets of
colours~$C$; see \cite{GV10} for a discussion in the category of
symmetric spectra with the positive model structure.

In general, if $\V$ is only a cofibrantly generated monoidal model category without any further assumptions, then the free-forgetful adjunction~(\ref{freeforgetful}) does not provide a full model structure on $\Oper_C({\V})$, but a weaker notion called a \emph{semi model structure}. The idea of a semi model category is to assume that all axioms of model
categories hold, except for the lifting axiom and the factorization axiom,
that hold only for morphisms with cofibrant domain. Semi model categories are a particular example of \emph{cat\'egorie d\'erivable relevante} in the sense of~\cite{Cis10}; see also \cite{Rad09}.

\begin{defn}
A \emph{semi model category} structure on a category $\V$ is given
by three classes of distinguished morphisms called \emph{weak
equivalences}, \emph{fibrations} and \emph{cofibrations},
satisfying the following axioms:
\begin{itemize}
\item[{\rm (i)}] $\V$ is complete and cocomplete \item[{\rm (ii)}]
Let $f$ and $g$ be two composable morphisms. If any two among $f$,
$g$ and $g\circ f$ are weak equivalences, then so is the third
\item[{\rm (iii)}] The classes of weak equivalences, fibrations
and cofibrations are closed under retracts. \item[{\rm (iv)}] The
class of fibrations has the right lifting property with respect to
the class of trivial cofibrations with cofibrant domain. The class
of trivial fibrations has the right lifting property with respect
to the class of cofibrations with cofibrant domain. \item[{\rm
(v)}] There exist functorial factorizations of any morphism with
cofibrant domain into a cofibration followed by a trivial
fibration and also into a trivial cofibration followed by a
fibration.
\item[{\rm (vi)}] The initial object is cofibrant.
\item[{\rm (vii)}] The  classes of fibrations and trivial
fibrations are stable under pullbacks and (possibly transfinite)
composites.
\end{itemize}
\end{defn}

Usually, semi model structures are transferred by an adjunction
from a cofibrantly generated model category, as
in~(\ref{freeforgetful}). The fibrations and weak equivalences of
the semi model structure are those morphisms that are fibration
and weak equivalences in the model category when we apply the
right adjoint. Cofibrations are defined as the morphisms with they
the left lifting property with respect to the trivial fibrations.
In the semi model structures obtained this way, axioms (vi) and
(vii) are always satisfied and may be deduced from the others.

Semi model categories were first defined by Hovey in~\cite{Hov98} and applied to the case of algebras over a commutative monoid. The use of semi model structures for categories of operads is due to Spitzweck~\cite{Spi}; see also~\cite[\S12]{Fre09}. A proof of the following result for operads with one colour can be found in~\cite[Theorem~12.2A]{Fre09} (cf.\ \cite[Theorem 3.2]{Spi}). An extension to the coloured case can be proved using the same arguments. For a version in the category of PROPs, see~\cite[\S4]{Fre10}.

\begin{thm}
If $\V$ is a cofibrantly generated monoidal model category, then the model structure on $\Coll_C(\V)$ transfers along the free-forgetful adjunction to a cofibrantly generated semi model structure on $\Oper_C(\V)$, in which a morphism $\mathcal{P}\rightarrow\mathcal{Q}$ is a fibration or a weak equivalence if $\mathcal{P}(c_1,\ldots, c_n;c)\rightarrow \mathcal{Q}(c_1,\ldots, c_n; c)$
is a fibration or a weak equivalence in $\V$, respectively, for every $(c_1,\ldots, c_n, c)$. $\hfill\qed$
\end{thm}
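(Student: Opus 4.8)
The plan is to verify the semi model category axioms by hand, in the style of Spitzweck~\cite{Spi} and Fresse~\cite[\S12]{Fre09}, rather than to quote a black-box transfer theorem: a transfer theorem producing a genuine model structure would require an acyclicity property for \emph{all} relative cell complexes, whereas here one only has it for those with cofibrant domain. Throughout, a \emph{weak equivalence} (resp.\ \emph{fibration}) of $C$-coloured operads is a map whose underlying map of collections is one, and a \emph{cofibration} is a map with the left lifting property against every map whose underlying map of collections is a trivial fibration; note that a map of operads is a trivial fibration exactly when its underlying map of collections is.

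First I would record the prerequisites. Since $\V$ is cofibrantly generated, so is $\Coll_C(\V)$ in the model structure described above; write $I$ and $J$ for its generating cofibrations and generating trivial cofibrations, obtained from those of $\V$ by placing them in the arity slots and inducing up along the free symmetric-group-orbit functors, and take $FI$, $FJ$ as candidate generating sets for $\Oper_C(\V)$, where $F$ is the free $C$-coloured operad functor. The category $\Oper_C(\V)$ is complete, with limits created by $U$; it is cocomplete because the free $C$-coloured operad monad, being built from the tensor product of $\V$ (which preserves all colimits in each variable, $\V$ being closed), preserves reflexive coequalizers, so that $\Oper_C(\V)$ has reflexive coequalizers and hence is cocomplete by the standard fact about categories of algebras over a monad. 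The same input shows the monad is finitary, so $U$ preserves filtered colimits; therefore the (small) domains of $I$ and $J$ have small images under $F$ relative to $FI$- and $FJ$-cell complexes, the small object argument applies to $FI$ and to $FJ$, and any morphism of $C$-coloured operads factors functorially as a relative $FI$-cell complex followed by a map with the right lifting property against $FI$, and likewise with $FJ$. By adjunction, a map of operads has the right lifting property against $FI$ (resp.\ $FJ$) iff its underlying map has the right lifting property against $I$ (resp.\ $J$), i.e.\ iff it is a trivial fibration (resp.\ a fibration); and every map in $FI$ is a cofibration while every map in $FJ$ has the left lifting property against fibrations, so relative $FI$-cell complexes are cofibrations and relative $FJ$-cell complexes have the left lifting property against fibrations.

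The main point, and the only substantial one, is the acyclicity lemma: given a pushout
$$
\xymatrix{
F(A) \ar[r] \ar[d] & \mathcal{P} \ar[d] \\
F(B) \ar[r] & \mathcal{Q}
}
$$
in $\Oper_C(\V)$ in which $A\to B$ is a trivial cofibration of $C$-coloured collections and $\mathcal{P}$ is a cofibrant operad, the induced map $U\mathcal{P}\to U\mathcal{Q}$ is a trivial cofibration of $C$-coloured collections. As in \cite[\S12]{Fre09}, one exhibits $U\mathcal{Q}$ as the filtered colimit of a natural filtration of $U\mathcal{Q}$ by subcollections, starting from $U\mathcal{P}$ and indexed by the number of vertices decorated by $B$ in $C$-coloured trees, and identifies the $k$-th filtration inclusion, in $\Coll_C(\V)$, with a pushout of the map obtained by tensoring the $\Sigma_k$-object $T_k(\mathcal{P})$ --- the coproduct of tensor products of components of $\mathcal{P}$ over $C$-coloured trees with $k$ such vertices, with its $\Sigma_k$-action --- over $\Sigma_k$ with the source, respectively the target, of the $k$-fold iterated pushout-product of $A\to B$. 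The decisive input is that the underlying collection of a cofibrant operad is $\Sigma$-cofibrant in exactly the sense that makes each $T_k(\mathcal{P})$ a $\Sigma_k$-cofibrant object; granting this, the pushout-product axiom in $\V$ forces each filtration inclusion to be a trivial cofibration of collections, and trivial cofibrations are stable under cobase change, transfinite composition and retracts, so $U\mathcal{P}\to U\mathcal{Q}$ is a trivial cofibration. Consequently every relative $FJ$-cell complex with cofibrant domain has underlying trivial cofibration, and in particular is a weak equivalence. This $\Sigma$-cofibrancy statement for cofibrant operads is the coloured analogue of \cite[Theorem~12.2A]{Fre09}, and its proof --- a tree induction --- is where essentially all of the work lies; the passage from one colour to $C$ colours only replaces ordinary trees by $C$-coloured ones and threads the colours through the bookkeeping, so I expect this to be the main obstacle yet one requiring no new idea.

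It remains to assemble the axioms of a semi model category. Axiom~(i) was checked above; the two-out-of-three property and closure under retracts, (ii)--(iii), are inherited from $\Coll_C(\V)$ through $U$ (and are formal for cofibrations). For (v), the small object argument factors any morphism as a cofibration (a relative $FI$-cell complex) followed by a trivial fibration, and factors any morphism \emph{with cofibrant domain} as a relative $FJ$-cell complex --- which is then a trivial cofibration by the acyclicity lemma --- followed by a fibration. For~(iv), a trivial cofibration $f$ with cofibrant domain factors through the small object argument as $f=p\circ i$ with $i$ a relative $FJ$-cell complex and $p$ a fibration; since the domain of $f$ is cofibrant, $i$ is a weak equivalence, hence $p$ is a trivial fibration; as $f$ is a cofibration it lifts against $p$, which exhibits $f$ as a retract of $i$, and relative $FJ$-cell complexes lift against fibrations, so $f$ does too --- the dual statement, for a cofibration with cofibrant domain against trivial fibrations, is immediate from the definition of cofibration. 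Axioms~(vi) and~(vii) are automatic for a transferred structure, as recalled before the statement. Thus $FI$ and $FJ$ cofibrantly generate a semi model structure on $\Oper_C(\V)$ whose fibrations and weak equivalences are detected componentwise in $\V$, which is the assertion.
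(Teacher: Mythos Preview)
The paper does not supply its own proof of this theorem: it is stated with a $\qed$ and the surrounding text refers the reader to \cite[Theorem~12.2A]{Fre09} and \cite[Theorem~3.2]{Spi} for the one-colour case, asserting that the coloured extension goes through by the same arguments. Your proposal is precisely a sketch of that Fresse--Spitzweck argument (small object argument on $FI$ and $FJ$, the tree-filtration acyclicity lemma requiring $\Sigma$-cofibrancy of the underlying collection of a cofibrant operad, then the retract trick to handle trivial cofibrations with cofibrant domain), so it is exactly the route the paper points to and is correct as outlined.
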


\begin{rem}
In every semi model category, trivial fibrations have the right lifting property with respect to cofibrant objects, since the initial object of a semi model category is assumed to be cofibrant.
\end{rem}

Let $\mathcal{P}$ be a $C$-coloured operad in $\V$. We denote by $\mathcal{P}_{\infty}$ a cofibrant replacement of $\mathcal{P}$ in the semi model category $\Oper_C(\V)$, that is, $\mathcal{P}_{\infty}$ is cofibrant and there is a trivial fibration $\mathcal{P}_{\infty}\rightarrow \mathcal{P}$. The category of $\mathcal{P}_{\infty}$-algebras is called the \emph{category of homotopy $\mathcal{P}$-algebras}.

\section{Transfer of algebra structures}

Let $\V$ be a cofibrantly generated monoidal model category and
let $\Oper_C(\V)$ be the category of $C$-coloured operads in $\V$
with the transferred semi model structure. Let $\M$ be a monoidal $\V$-model category.

In this section, we study how to transfer a $\mathcal{P}$-algebra
structure on an object of $\M^C$ along a morphism, where $\M^C$ is
viewed as a model category with the product model structure.  In
particular, we give sufficient conditions for the extension of the
fibrant replacement functor and the cofibrant replacement functor
over $\M^C$ to preserve algebras over $C$-coloured operads. Our
main result is that the unit and counit of a derived Quillen
adjunction between monoidal $\V$-model categories preserve
algebras over cofibrant coloured operads. Given and object
$\mathbf{X}$ of $\M^C$ and two $\mathcal{P}$-algebra structures
$\gamma$ and $\gamma'\colon \mathcal{P}\rightarrow
\End(\mathbf{X})$ on $\mathbf{X}$, we say that they \emph{coincide
up to homotopy} if $\gamma$ and $\gamma'$ are equal in the
homotopy category of $C$-coloured operads in~$\V$.

\begin{lem}
Let $\M$ be a monoidal $\V$-model category. Let $\mathcal{P}$ be a
cofibrant $C$\nobreakdash-co\-lou\-red operad in $\V$ and
$\mathbf{f}\colon\mathbf{X}\rightarrow \mathbf{Y}$ a morphism in
$\M^C$.
\begin{itemize}
\item[{\rm (i)}] If the natural map $\End_{\mathcal{P}}(\mathbf{Y})\rightarrow \Hom_{\mathcal{P}}(\mathbf{X},\mathbf{Y})$ induced by $\mathbf{f}$ is a trivial fibration of $C$-coloured collections in $\V$, then any $\mathcal{P}$-algebra structure on $\mathbf{X}$ extends along $\mathbf{f}$ to a homotopy unique $\mathcal{P}$-algebra structure on $\mathbf{Y}$\ such that $\mathbf{f}$ is a map of $\mathcal{P}$-algebras.
\item[{\rm (ii)}] If the natural map $\End_{\mathcal{P}}(\mathbf{X})\rightarrow \Hom_{\mathcal{P}}(\mathbf{X},\mathbf{Y})$ induced by $\mathbf{f}$ is a trivial fibration of $C$-coloured collections in $\V$, then any $\mathcal{P}$-algebra structure on $\mathbf{Y}$ lifts along $\mathbf{f}$ to a homotopy unique $\mathcal{P}$-algebra structure on $\mathbf{X}$\ such that $\mathbf{f}$ is a map of $\mathcal{P}$\nobreakdash-algebras.
\end{itemize}
\label{keylem}
\end{lem}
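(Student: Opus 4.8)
The plan is to recast the whole statement as a lifting problem in the semi model category $\Oper_C(\V)$ and then use that $\mathcal{P}$ is cofibrant. Recall from the previous section that a $\mathcal{P}$-algebra structure on an object $\mathbf{Z}$ of $\M^C$ is the same datum as a morphism of $C$-coloured operads $\mathcal{P}\to\End_{\mathcal{P}}(\mathbf{Z})$, and that, since the model category $\V$ is complete, $\End_{\mathcal{P}}(\mathbf{f})$ is the pullback in $\Oper_C(\V)$ of the diagram $\End_{\mathcal{P}}(\mathbf{X})\to\Hom_{\mathcal{P}}(\mathbf{X},\mathbf{Y})\leftarrow\End_{\mathcal{P}}(\mathbf{Y})$; write $p\colon\End_{\mathcal{P}}(\mathbf{f})\to\End_{\mathcal{P}}(\mathbf{X})$ and $q\colon\End_{\mathcal{P}}(\mathbf{f})\to\End_{\mathcal{P}}(\mathbf{Y})$ for the two projections. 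Since this pullback square is exactly the diagram expressing that $\mathbf{f}$ is a map of $\mathcal{P}$-algebras, the universal property shows that a $\mathcal{P}$-algebra structure on $\mathbf{Y}$ which extends a given structure $\gamma\colon\mathcal{P}\to\End_{\mathcal{P}}(\mathbf{X})$ along $\mathbf{f}$ and makes $\mathbf{f}$ a map of $\mathcal{P}$-algebras is precisely a morphism $\tilde\gamma\colon\mathcal{P}\to\End_{\mathcal{P}}(\mathbf{f})$ with $p\tilde\gamma=\gamma$, the resulting structure on $\mathbf{Y}$ being $q\tilde\gamma$; dually, a structure on $\mathbf{X}$ that lifts a given $\delta\colon\mathcal{P}\to\End_{\mathcal{P}}(\mathbf{Y})$ along $\mathbf{f}$ is a morphism $\tilde\delta$ with $q\tilde\delta=\delta$, the resulting structure on $\mathbf{X}$ being $p\tilde\delta$.

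Next I would check that $p$ is a trivial fibration of $C$-coloured operads under the hypothesis of~(i), and $q$ under that of~(ii). The forgetful functor $\Oper_C(\V)\to\Coll_C(\V)$ is a right adjoint, hence preserves this pullback, so the underlying collection map of $p$ is the pullback of $\End_{\mathcal{P}}(\mathbf{Y})\to\Hom_{\mathcal{P}}(\mathbf{X},\mathbf{Y})$ in $\Coll_C(\V)$, and that of $q$ is the pullback of $\End_{\mathcal{P}}(\mathbf{X})\to\Hom_{\mathcal{P}}(\mathbf{X},\mathbf{Y})$. Since the model structure on $\Coll_C(\V)$ is the componentwise one and trivial fibrations are stable under pullback in any model category, $p$ (resp.\ $q$) is a trivial fibration of collections, hence of operads by the definition of the transferred semi model structure. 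Existence now follows at once: the initial object of $\Oper_C(\V)$ is cofibrant, so the unique map from it to $\mathcal{P}$ is a cofibration with cofibrant domain, and axiom~(iv) of semi model categories provides a lift $\tilde\gamma$ of $\gamma$ along $p$ (resp.\ a lift $\tilde\delta$ of $\delta$ along $q$).

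For homotopy uniqueness in~(i), suppose $\tilde\gamma_0,\tilde\gamma_1\colon\mathcal{P}\to\End_{\mathcal{P}}(\mathbf{f})$ are two lifts of $\gamma$ along $p$. Since $\mathcal{P}$ is cofibrant and cofibrations are closed under cobase change, $\mathcal{P}\vee\mathcal{P}$ is cofibrant, so by axiom~(v) the fold map factors as $\mathcal{P}\vee\mathcal{P}\xrightarrow{(i_0,i_1)}\mathrm{Cyl}(\mathcal{P})\xrightarrow{s}\mathcal{P}$ with $(i_0,i_1)$ a cofibration and $s$ a trivial fibration; thus $\mathrm{Cyl}(\mathcal{P})$ is a cylinder object for $\mathcal{P}$ and $(i_0,i_1)$ has cofibrant domain. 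Since $p\tilde\gamma_k=\gamma=\gamma s i_k$ for $k=0,1$, axiom~(iv) gives a lift $H\colon\mathrm{Cyl}(\mathcal{P})\to\End_{\mathcal{P}}(\mathbf{f})$ of $\gamma s$ along $p$ with $Hi_k=\tilde\gamma_k$. Then $qH$ is a left homotopy between $q\tilde\gamma_0$ and $q\tilde\gamma_1$; because $s i_0=s i_1=\mathrm{id}_{\mathcal{P}}$ with $s$ a weak equivalence, the images of $i_0$ and $i_1$ in $\Ho(\Oper_C(\V))$ both equal $[s]^{-1}$, whence $[q\tilde\gamma_0]=[qH]\,[i_0]=[qH]\,[i_1]=[q\tilde\gamma_1]$. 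So the two $\mathcal{P}$-algebra structures on $\mathbf{Y}$ coincide up to homotopy. For~(ii) the argument is identical with $p$ and $q$ interchanged and $\gamma$ replaced by $\delta$.

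I expect the only step requiring real care to be the claim of the second paragraph that $p$ (resp.\ $q$) is a trivial fibration \emph{in $\Oper_C(\V)$}: this combines three inputs from the preceding sections, namely that the forgetful functor to $\Coll_C(\V)$ creates limits, that the model structure on $\Coll_C(\V)$ is componentwise (so trivial fibrations, being stable under pullback in $\V$, are stable under pullback there), and that fibrations and weak equivalences of operads are detected on the underlying collections. Once this is in place, everything else is a purely formal manipulation with the cofibrancy of $\mathcal{P}$ and axioms~(iv) and~(v) of semi model categories.
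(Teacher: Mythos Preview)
Your proof is correct and follows essentially the same approach as the paper's own proof: form the pullback operad $\End_{\mathcal{P}}(\mathbf{f})$, observe that the relevant projection is a trivial fibration (as a pullback of one), and lift using cofibrancy of $\mathcal{P}$. Your treatment is more explicit in two places---you spell out why the pullback in $\Oper_C(\V)$ is computed on underlying collections, and for uniqueness you construct a cylinder object and an explicit left homotopy, whereas the paper simply invokes the fact that two morphisms from a cofibrant object which agree after composing with a trivial fibration are equal in the homotopy category---but these are elaborations of the same argument rather than a different route.
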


\begin{proof}
Consider the $C$-coloured operad $\End_{\mathcal{P}}(\mathbf{f})$
defined as the following pullback in the category of $C$-coloured
collections
\begin{equation}
\xymatrix
{
\End_{\mathcal{P}}(\mathbf{f})\ar@{.>}[r]^{\rho}\ar@{.>}[d]_{\tau} & \End_{\mathcal{P}}(
\mathbf{Y}) \ar[d] \\
\End_{\mathcal{P}}(\mathbf{X})\ar[r] & \Hom_{\mathcal{P}}(\mathbf{X}, \mathbf{Y}).
}\label{endf}
\end{equation}
Under the assumptions of (i), the morphism $\tau$ is a trivial fibration since it is the pullback of a trivial fibration. Since the coloured operad $\mathcal{P}$ is cofibrant, there is a lifting
$$
\xymatrix{
 & \End_{\mathcal{P}}(\mathbf{f}) \ar[d]^{\tau} \\
\mathcal{P}\ar[r]^-{\alpha}\ar@{.>}[ur] & \End_{\mathcal{P}}(\mathbf{X}),
}
$$
where $\alpha\colon
\mathcal{P}\rightarrow\End_{\mathcal{P}}(\mathbf{X})$ is the given
$\mathcal{P}$-algebra structure on $\mathbf{X}$. Now, composing
this lifting with the map $\rho$  endows $\mathbf{Y}$ with a
$\mathcal{P}$-algebra structure such that $\mathbf{f}$ is a map of
$\mathcal{P}$-algebras. Part (ii) follows similarly by using that,
in this case, the morphism $\rho$ in~(\ref{endf}) is a trivial
fibration.

To prove homotopy uniqueness in part (i), suppose that we have two $\mathcal{P}$-algebra structures on $\mathbf{Y}$, denoted by $\gamma$ and $\gamma'\colon \mathcal{P}\rightarrow \End_{\mathcal{P}}(\mathbf{Y})$, and assume further that $\mathbf{f}$ is a map of $\mathcal{P}$-algebras for each of
them, i.e., $\gamma$ and $\gamma'$ factor through $\End_{\mathcal{P}}({\mathbf{f}})$. Now, let $\delta,\delta'\colon \mathcal{P}\rightarrow \End_{\mathcal{P}}(\mathbf{f})$ be such that $\gamma=\rho\circ\delta$ and $\gamma'=\rho\circ\delta'$. Since $\tau$ is a trivial fibration and $\tau\circ\delta=\tau\circ\delta'$, it follows that $\delta$ and $\delta'$ are equal in the homotopy category of $C$-coloured operads and hence so are $\gamma$ and $\gamma'$. Uniqueness for part (ii) is proved in the same way.
\end{proof}

\begin{defn}
Let $F$ be an endofunctor on a category $\M$. The \emph{extension
of $F$ over $\M^C$} is the endofunctor on $\M^C$, which we keep
denoting by $F$, given by $F\mathbf{X} = (FX(c))_{c\in C}$. If $F$
is equipped with an augmentation $\eta$ or a coaugmentation
$\varepsilon$, then $\eta_{\mathbf{X}}=(\eta_{X(c)})_{c\in C}$ and
$\varepsilon_{\mathbf{X}}=(\varepsilon_{X(c)})_{c\in C}$,
respectively.
\end{defn}

Our main application for Lemma~\ref{keylem} is when $\mathbf{f}$ is the natural map from $\mathbf{X}$ to the extension of a fibrant replacement $R\mathbf{X}$ or the natural map from the extension of a cofibrant replacement $Q\mathbf{X}$ to $\mathbf{X}$.
Recall that $R$ denotes the functorial fibrant replacement in $\M$, that is, a coaugmented functor $(R,j)$ such that for every $X$ in $\M$ the natural map $j_X\colon X\rightarrow RX$  is a trivial cofibration and $RX$ is fibrant.
 Dually, $Q$ denotes the functorial cofibrant replacement in $\M$, that is, an augmented functor $(Q,i)$ such that for every $X$ in $\M$ the natural map $i_X\colon QX\rightarrow X$ is a trivial fibration and $QX$ is cofibrant.

\begin{prop}
Let $\M$ be a monoidal $\V$-model category. Let $\mathcal{P}$ be a
cofibrant $C$\nobreakdash-coloured operad in $\V$ and consider the
extension of the fibrant replacement functor $R$ over $\M^C$. Let
$\mathbf{X}$ be a $\mathcal{P}$-algebra such that $X(c)$ is
cofibrant in $\M$ for every $c\in C$. Then $R\mathbf{X}$ admits a
$\mathcal{P}$-algebra structure such that the natural map
$j_{\mathbf{X}}\colon \mathbf{X}\rightarrow R\mathbf{X}$ is a map
of $\mathcal{P}$-algebras. \label{fib_rep_alg}
\end{prop}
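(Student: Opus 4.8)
The plan is to apply Lemma~\ref{keylem}(i) with $\mathbf{f} = j_{\mathbf{X}}\colon \mathbf{X}\rightarrow R\mathbf{X}$. By that lemma, it suffices to show that the natural map of $C$-coloured collections
$$
\End\nolimits_{\mathcal{P}}(R\mathbf{X})\longrightarrow \Hom\nolimits_{\mathcal{P}}(\mathbf{X}, R\mathbf{X})
$$
is a trivial fibration in $\V$, i.e., that for every $(n+1)$-tuple $(c_1,\ldots,c_n,c)$ with $\mathcal{P}(c_1,\ldots,c_n;c)\ne 0$ the map
$$
\Hom\bigl(RX(c_1)\otimes\cdots\otimes RX(c_n),\,RX(c)\bigr)\longrightarrow \Hom\bigl(X(c_1)\otimes\cdots\otimes X(c_n),\,RX(c)\bigr)
$$
is a trivial fibration in $\V$. (On components where $\mathcal{P}$ vanishes both collections are $0$ and the map is the identity.)

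First I would recall that $\Hom(-,-)$ stands for $U(\Hom_{\M}(-,-))$, and that by the displayed Proposition the functors $i(-)\otimes -$, $r(\Hom_{\M}(-,-))$ and $\Hom_{\M}(i(-),-)$ form a Quillen adjunction of two variables; in particular, by the enriched analog~(\ref{enrichedSM7}) of axiom (SM7), if $g\colon A\rightarrow B$ is a cofibration in $\M$ and $W$ is fibrant in $\M$, then $\Hom(B,W)\rightarrow \Hom(A,W)$ is a fibration in $\V$, trivial if $g$ is. So the key point is to exhibit the map
$$
X(c_1)\otimes\cdots\otimes X(c_n)\longrightarrow RX(c_1)\otimes\cdots\otimes RX(c_n)
$$
as a trivial cofibration between cofibrant objects in $\M$, and to know that $RX(c)$ is fibrant (which holds by definition of $R$). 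Each $j_{X(c_i)}\colon X(c_i)\rightarrow RX(c_i)$ is a trivial cofibration, and its source $X(c_i)$ is cofibrant by hypothesis, hence so is $RX(c_i)$. The tensor product is iterated pushout-product: a consequence of the pushout-product axiom in the monoidal $\V$-model category $\M$ (stated just after the unit axiom) is that the tensor product of cofibrations with cofibrant domains is a cofibration, and a standard induction on the number of factors upgrades ``trivial cofibration $\otimes$ cofibrant object'' to a trivial cofibration; thus the $n$-fold tensor of the $j_{X(c_i)}$ is a trivial cofibration between cofibrant objects. (For $n=0$ the map is the identity of $I$, trivially fine.)

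Applying~(\ref{enrichedSM7}) to this trivial cofibration and to the fibrant object $RX(c)$ then shows that the induced map $\Hom(RX(c_1)\otimes\cdots\otimes RX(c_n), RX(c))\rightarrow \Hom(X(c_1)\otimes\cdots\otimes X(c_n), RX(c))$ is a trivial fibration in $\V$. Running this over all tuples gives that $\End_{\mathcal{P}}(R\mathbf{X})\rightarrow \Hom_{\mathcal{P}}(\mathbf{X}, R\mathbf{X})$ is a trivial fibration of $C$-coloured collections, and Lemma~\ref{keylem}(i), using that $\mathcal{P}$ is cofibrant, produces the desired $\mathcal{P}$-algebra structure on $R\mathbf{X}$ making $j_{\mathbf{X}}$ a map of $\mathcal{P}$-algebras. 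The main obstacle I anticipate is the bookkeeping in the induction showing that an iterated tensor of trivial cofibrations with cofibrant domains is again a trivial cofibration with cofibrant domain — one must be careful that at each stage the relevant objects remain cofibrant so that the pushout-product axiom applies, but this is routine given the axioms already recorded in Section~\ref{enrichedmonmod}. (Homotopy uniqueness of the structure, if wanted here, is immediate from the uniqueness clause of Lemma~\ref{keylem}(i).)
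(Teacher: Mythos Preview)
Your proposal is correct and follows essentially the same route as the paper's own proof: reduce to Lemma~\ref{keylem}(i) by showing that $\End_{\mathcal{P}}(R\mathbf{X})\rightarrow \Hom_{\mathcal{P}}(\mathbf{X}, R\mathbf{X})$ is a trivial fibration of $C$-coloured collections, which in turn comes down to showing that $X(c_1)\otimes\cdots\otimes X(c_n)\rightarrow RX(c_1)\otimes\cdots\otimes RX(c_n)$ is a trivial cofibration (via the pushout-product axiom and cofibrancy of the $X(c_i)$) and then applying~(\ref{enrichedSM7}) against the fibrant object $RX(c)$. Your additional remarks on the $n=0$ case and on the inductive bookkeeping are accurate but not strictly needed beyond what the paper sketches.
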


\begin{proof}
For all $n\ge 0$ and all $(c_1,\ldots, c_n;c)$ such that $\mathcal{P}(c_1,\ldots,c_n; c)\ne 0$ the induced map
$$
\xymatrix{
\Hom(RX(c_1)\otimes\cdots\otimes RX(c_n), RX(c)) \ar[d]^{(j_{\mathbf{X}})^*} \\
\Hom(X(c_1)\otimes\cdots\otimes X(c_n), R X(c))
}
$$
is a trivial fibration in $\V$. Indeed, the map
$$
X(c_1)\otimes\cdots\otimes X(c_n)\longrightarrow RX(c_1)\otimes\cdots\otimes RX(c_n)
$$
is a trivial cofibration by the fact that $X(c)$ is cofibrant for
all $c\in C$ and the pushout-product axiom in $\M$. Now,
$(j_{\mathbf{X}})^*$  is a trivial fibration
by~(\ref{enrichedSM7}) since $RX(c)$ is fibrant. Hence the
morphism of $C$-coloured collections
$$
\End\nolimits_{\mathcal{P}}(R\mathbf{X})\longrightarrow \Hom\nolimits_{\mathcal{P}}(\mathbf{X}, R\mathbf{X})
$$
is a trivial fibration. The result follows now from
Lemma~\ref{keylem}(i).
\end{proof}

\begin{rem}
The cofibrancy condition on the objects $X(c)$ is needed to prove that the map
$$
X(c_1)\otimes\cdots\otimes X(c_n)\longrightarrow RX(c_1)\otimes\cdots\otimes RX(c_n)
$$
is a trivial cofibration. In general, the tensor product of two trivial cofibrations is not a trivial cofibration. The pushout-product axiom implies that this is the case if the trivial cofibrations involved have cofibrant domains or, more generally, when the domains of the generating trivial cofibrations of $\M$ are cofibrant.
\end{rem}

\begin{prop}
Let $\M$ be a monoidal $\V$-model category. Let $\mathcal{P}$ be a
cofibrant $C$\nobreakdash-coloured operad in $\V$ and consider the
extension of a cofibrant replacement $Q$ over $\M^C$. Let
$\mathbf{X}$ be a $\mathcal{P}$-algebra in $\M$. If the induced
map
\begin{equation}
\Hom(I, QX(c))\longrightarrow \Hom(I, X(c)),
\label{unit_assumption}
\end{equation}
where $I$ denotes the unit of $\M$, is a trivial fibration for
every $c\in C$ such that $\mathcal{P}(\,;c)\ne 0$, then
$Q\mathbf{X}$ admits a $\mathcal{P}$\nobreakdash-algebra structure
such that the natural map $i_{\mathbf{X}}\colon
Q\mathbf{X}\rightarrow \mathbf{X}$ is a map of
$\mathcal{P}$-algebras. \label{cof_rep_alg}
\end{prop}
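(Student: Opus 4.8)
The plan is to mimic the proof of Proposition~\ref{fib_rep_alg}, dualizing it by replacing $R$ with $Q$, trivial cofibrations with trivial fibrations, and $\End_{\mathcal{P}}(R\mathbf{X})\to\Hom_{\mathcal{P}}(\mathbf{X},R\mathbf{X})$ with $\End_{\mathcal{P}}(Q\mathbf{X})\to\Hom_{\mathcal{P}}(Q\mathbf{X},\mathbf{X})$, and then to apply Lemma~\ref{keylem}(ii). So first I would show that for all $n\ge 0$ and all $(c_1,\ldots,c_n;c)$ with $\mathcal{P}(c_1,\ldots,c_n;c)\ne 0$, the map
$$
\Hom(QX(c_1)\otimes\cdots\otimes QX(c_n),QX(c))\longrightarrow\Hom(QX(c_1)\otimes\cdots\otimes QX(c_n),X(c))
$$
induced by $i_{X(c)}\colon QX(c)\to X(c)$ is a trivial fibration in $\V$. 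The source $QX(c_1)\otimes\cdots\otimes QX(c_n)$ is cofibrant in $\M$, being a tensor product of cofibrant objects (using the pushout-product axiom in $\M$, and for $n=0$ this is the unit $I$, which we are free to assume cofibrant or handle via the hypothesis). Since $i_{X(c)}\colon QX(c)\to X(c)$ is a trivial fibration, the map $0\to QX(c_1)\otimes\cdots\otimes QX(c_n)$ is a cofibration, and the enriched (SM7) axiom~(\ref{enrichedSM7}) applied with $f\colon 0\to QX(c_1)\otimes\cdots\otimes QX(c_n)$ a cofibration and $g=i_{X(c)}$ a trivial fibration shows that $(i_{X(c)})_*$ is a trivial fibration in $\V$.

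The single exceptional case is $n=0$: there the relevant map is $\Hom(I,QX(c))\to\Hom(I,X(c))$, which is exactly the map appearing in hypothesis~(\ref{unit_assumption}); it is a trivial fibration precisely when $\mathcal{P}(\,;c)\ne 0$, which is the condition under which this component of the collection matters. This is why the hypothesis is stated the way it is: without assuming the unit $I$ of $\M$ is cofibrant we cannot conclude that $\Hom(I,-)$ is right Quillen, so we simply require directly that it behaves well on the cofibrant replacement $QX(c)\to X(c)$ for the colours that can occur as a nullary output of $\mathcal{P}$.

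Combining the two cases, the induced morphism of $C$-coloured collections
$$
\End\nolimits_{\mathcal{P}}(Q\mathbf{X})\longrightarrow\Hom\nolimits_{\mathcal{P}}(Q\mathbf{X},\mathbf{X})
$$
is a levelwise trivial fibration in $\V$, hence a trivial fibration of $C$-coloured collections. Applying Lemma~\ref{keylem}(ii) to $\mathbf{f}=i_{\mathbf{X}}\colon Q\mathbf{X}\to\mathbf{X}$ then transfers the $\mathcal{P}$-algebra structure from $\mathbf{X}$ to a homotopy unique $\mathcal{P}$-algebra structure on $Q\mathbf{X}$ making $i_{\mathbf{X}}$ a map of $\mathcal{P}$-algebras, which is the claim. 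The only genuinely delicate point is the bookkeeping at arity zero, where cofibrancy of $X(c)$ is of no help and one must invoke the extra hypothesis~(\ref{unit_assumption}); everything else is a routine dualization of the argument already given for $R$.
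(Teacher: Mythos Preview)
Your proof is correct and follows essentially the same route as the paper: dualize the argument for $R$, verify that $(i_{X(c)})_*$ is a trivial fibration using cofibrancy of $QX(c_1)\otimes\cdots\otimes QX(c_n)$ and the enriched (SM7) axiom for $n\ge 1$, invoke hypothesis~(\ref{unit_assumption}) for $n=0$, and then apply Lemma~\ref{keylem}(ii). Your added explanation of why the arity-zero hypothesis is needed is accurate and matches the remark following the proposition in the paper.
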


\begin{proof}
The result is proved in the same way as Proposition \ref{fib_rep_alg}, but using Lem\-ma~\ref{keylem}(ii). In this case we have to check that the induced map
$$
\xymatrix{
\Hom(QX(c_1)\otimes\cdots\otimes QX(c_n), QX(c)) \ar[d]^{(i_{\mathbf{X}})_*} \\
\Hom(QX(c_1)\otimes\cdots\otimes QX(c_n), X(c))
}
$$
is a trivial fibration. The case $n=0$ is true by assumption (\ref{unit_assumption}). For $n\ge 1$ the result follows from the pushout-product axiom and~(\ref{enrichedSM7}).
\end{proof}

\begin{rem}
Note that condition~$(\ref{unit_assumption})$ can be avoided if the unit $I$ is cofibrant in $\M$ or if the $C$-coloured operad $\mathcal{P}$ is \emph{positive}, i.e., $\mathcal{P}(\,; c)=0$ for every $c\in C$.
\end{rem}

\begin{thm}
Let $\M$ and $\N$ be monoidal $\V$-model categories and $F\colon\M\rightleftarrows\N\colon U$ a monoidal Quillen $\V$-adjunction. Let
$$
\xymatrix{
\mathbb{L}F\colon {\rm Ho}(\M^C) \ar@<3pt>[r] & \ar@<2pt>[l] {\rm Ho}(\N^C)\colon \mathbb{R}U
}
$$
denote the derived adjunction.
Let $\mathcal{P}$ be a cofibrant $C$-coloured operad in $\V$ and consider the extension of $F$ and $U$ over $\M^C$ and $\N^C$, respectively.
If $\mathbf{X}$ is a $\mathcal{P}$\nobreakdash-algebra in $\M$ such that $X(c)$ is cofibrant for every $c\in C$, then $ \mathbb{R}U\mathbb{L}F\mathbf{X}$ admits a
homotopy unique $\mathcal{P}$-algebra structure such that the unit of the derived adjunction
$$
\mathbf{X}\longrightarrow \mathbb{R}U\mathbb{L}F\mathbf{X}
$$
is a map of $\mathcal{P}$-algebras.
\label{der_fib}
\end{thm}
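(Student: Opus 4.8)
The plan is to reduce Theorem~\ref{der_fib} to the two propositions already established (Proposition~\ref{fib_rep_alg} and Proposition~\ref{cof_rep_alg}) by factoring the derived composite $\mathbb{R}U\mathbb{L}F$ through a chain of maps, each of which transfers the $\mathcal{P}$-algebra structure. Concretely, $\mathbb{L}F\mathbf{X}$ is computed by first taking a cofibrant replacement in $\M^C$ and then applying $F$; but since $\mathbf{X}$ is already cofibrant (each $X(c)$ is cofibrant, hence $\mathbf{X}$ is cofibrant in the product model structure), we may take $Q\mathbf{X}=\mathbf{X}$ and $\mathbb{L}F\mathbf{X}=F\mathbf{X}$. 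Then $\mathbb{R}U(F\mathbf{X})=U(R(F\mathbf{X}))$, where $R$ is the fibrant replacement functor in $\N^C$. So the unit of the derived adjunction is represented by the composite
$$
\mathbf{X}\xrightarrow{\ \eta_{\mathbf{X}}\ } UF\mathbf{X}\xrightarrow{\ U(j_{F\mathbf{X}})\ } UR F\mathbf{X},
$$
where $\eta$ is the unit of the (underived) adjunction and $j$ is the coaugmentation of the fibrant replacement in $\N^C$.

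The first step is to transfer the $\mathcal{P}$-algebra structure along $\eta_{\mathbf{X}}\colon\mathbf{X}\to UF\mathbf{X}$. Because $F$ is a \emph{strong} symmetric monoidal left Quillen functor and $U$ is lax symmetric monoidal, the endomorphism operad $\End(F\mathbf{X})$ in $\N$ receives a canonical map from $F_*\End(\mathbf{X})$ built from the structure maps $m\colon FA\otimes FB\to F(A\otimes B)$; applying $U$ and precomposing with the unit one gets a map of $C$-coloured operads $\End(\mathbf{X})\to\End(UF\mathbf{X})$ in $\V$ through which any $\mathcal{P}$-algebra structure on $\mathbf{X}$ pushes forward, making $\eta_{\mathbf{X}}$ automatically a map of $\mathcal{P}$-algebras. (This is the standard fact that lax monoidal functors carry algebras over operads to algebras over operads, applied to $U\circ F$; the monoidal $\V$-compatibility of the adjunction guarantees this is compatible with the $\V$-action, so the construction lives in $\Oper_C(\V)$.) Alternatively, and perhaps more cleanly, one invokes Lemma~\ref{keylem} directly: one should check that the relevant map of restricted endomorphism operads is a trivial fibration, but here it is cleaner to note that $\eta_{\mathbf{X}}$ transfers the structure strictly, with no cofibrancy hypothesis needed, since $F$ and $U$ are monoidal functors.

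The second step is to transfer the structure along $U(j_{FX(c)})\colon UFX(c)\to URFX(c)$. To apply Proposition~\ref{fib_rep_alg} to the object $F\mathbf{X}$ in $\N^C$, one needs $FX(c)$ to be cofibrant in $\N$ for every $c$: this holds because $X(c)$ is cofibrant in $\M$ and $F$ is a left Quillen functor. Proposition~\ref{fib_rep_alg} then endows $RF\mathbf{X}$ with a $\mathcal{P}$-algebra structure making $j_{F\mathbf{X}}$ a map of $\mathcal{P}$-algebras in $\N$; applying the lax monoidal functor $U$ transports this to a $\mathcal{P}$-algebra structure on $URF\mathbf{X}=\mathbb{R}U\mathbb{L}F\mathbf{X}$ such that $U(j_{F\mathbf{X}})$ is a map of $\mathcal{P}$-algebras in $\M$. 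Composing with the first step, the unit $\mathbf{X}\to\mathbb{R}U\mathbb{L}F\mathbf{X}$ is a map of $\mathcal{P}$-algebras.

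For homotopy uniqueness, I would observe that at each of the two steps the transferred structure is the unique one (up to homotopy in $\Oper_C(\V)$) compatible with the given map: the second step has homotopy uniqueness built into Proposition~\ref{fib_rep_alg} via Lemma~\ref{keylem}(i), and the first step is unique on the nose because the pushforward along a monoidal functor is determined by the original structure. Chaining these gives homotopy uniqueness of the composite structure on $\mathbb{R}U\mathbb{L}F\mathbf{X}$. \textbf{The main obstacle} I anticipate is bookkeeping the interaction between the monoidal structure on $U\circ F$ and the enrichment over $\V$: one must be careful that the pushforward of endomorphism operads really takes place in $\Oper_C(\V)$ and not just in $\Oper_C(\N)$ or some unenriched setting, which is exactly where the hypothesis that the adjunction is a monoidal Quillen $\V$-adjunction (with the natural isomorphism $\rho\colon F\circ i_{\M}\xrightarrow{\sim} i_{\N}$) is used. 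Once that compatibility is spelled out, the rest is a formal splicing of the two propositions.
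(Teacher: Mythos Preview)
Your proposal is correct and follows essentially the same route as the paper's proof: factor the derived unit as $\mathbf{X}\to UF\mathbf{X}\to UR(F\mathbf{X})$, use that $F$ is strong monoidal (and $U$ lax monoidal) to push the $\mathcal{P}$-algebra structure to $F\mathbf{X}$ and $UF\mathbf{X}$, note $F\mathbf{X}$ is cofibrant since $F$ is left Quillen, and then invoke Proposition~\ref{fib_rep_alg} for the fibrant replacement step. The only minor remark is that your opening sentence mentions reducing to both Propositions~\ref{fib_rep_alg} and~\ref{cof_rep_alg}, but in fact only the former is needed here (the latter is for the dual Theorem~\ref{der_cof}); your actual argument correctly uses only Proposition~\ref{fib_rep_alg}.
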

\begin{proof}
If $\mathbf{X}$ is a $\mathcal{P}$-algebra, so is $F\mathbf{X}$ since $F$ is a strong symmetric monoidal functor between monoidal $\V$-categories, and the unit of the adjunction
$$
\mathbf{X}\longrightarrow UF\mathbf{X}
$$
is a map of $\mathcal{P}$-algebras ($U$ is a symmetric monoidal functor between $\V$-categories). Now, $F\mathbf{X}$ is cofibrant because $\mathbf{X}$ is cofibrant in $\M^C$ and $F$ is a left Quillen functor.
By Proposition \ref{fib_rep_alg} the fibrant replacement
\begin{equation}
j_{F\mathbf{X}}\colon F\mathbf{X}\longrightarrow R(F\mathbf{X})
\label{eq01}
\end{equation}
has a homotopy unique $\mathcal{P}$-algebra structure such that $j_{F\mathbf{X}}$ is a map of $\mathcal{P}$-algebras. The functor $U$ sends $\mathcal{P}$-algebras to $\mathcal{P}$-algebras and maps of $\mathcal{P}$-algebras to maps of $\mathcal{P}$-algebras, hence applying it to~(\ref{eq01}) we have that
$$
UF\mathbf{X}\longrightarrow UR(F\mathbf{X})=\mathbb{R}U(F\mathbf{X})=\mathbb{R}U\mathbb{L}F\mathbf{X}
$$
is a map of $\mathcal{P}$-algebras (the last equality holds since $\mathbf{X}$ was already cofibrant).
\end{proof}

\begin{thm}
Let $\M$ and $\N$ be monoidal $\V$-model categories and $F\colon\M\rightleftarrows\N\colon U$ a monoidal Quillen $\V$-adjunction. Let
$$
\xymatrix{
\mathbb{L}F\colon {\rm Ho}(\M^C) \ar@<3pt>[r] & \ar@<2pt>[l] {\rm Ho}(\N^C)\colon \mathbb{R}U
}
$$
denote the derived adjunction. Let $\mathcal{P}$ be a cofibrant
$C$-coloured operad in $\V$ and consider the extensions of $F$ and
$U$ over $\M^C$ and $\N^C$, respectively. If $\mathbf{X}$ is a
$\mathcal{P}$\nobreakdash-algebra in $\N$ such that $X(c)$ is
fibrant for every $c\in C$, and  the induced map
\begin{equation}
\Hom(I,  Q(UX(c)))\longrightarrow \Hom(I, UX(c))
\label{unit_cond}
\end{equation}
is a trivial fibration in $\V$ for every $c\in C$ such that $\mathcal{P}(\,;c)\ne 0$, where $Q$ denotes the cofibrant replacement functor in $\M$, then $\mathbb{L}F \mathbb{R}U\mathbf{X}$ admits a
homotopy unique $\mathcal{P}$-algebra structure such that the counit of the derived adjunction
$$
\mathbb{L}F\mathbb{R}U\mathbf{X}\longrightarrow\mathbf{X}
$$
is a map of $\mathcal{P}$-algebras.
\label{der_cof}
\end{thm}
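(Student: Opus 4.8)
The plan is to dualize the proof of Theorem~\ref{der_fib}, replacing the appeal to Proposition~\ref{fib_rep_alg} by one to Proposition~\ref{cof_rep_alg} and using the fibrancy of $\mathbf{X}$ to avoid having to pass to a fibrant replacement of $\mathbf{X}$ first.

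First I would record the formal facts we need. Since $F$ is the left adjoint of a monoidal Quillen $\V$-adjunction, it is a strong symmetric monoidal $\V$-functor, and $U$ is the associated lax symmetric monoidal $\V$-functor; hence $U$ carries $\mathcal{P}$-algebras in $\N$ to $\mathcal{P}$-algebras in $\M$ and maps of $\mathcal{P}$-algebras to maps of $\mathcal{P}$-algebras, and $F$ does likewise in the other direction. In particular $U\mathbf{X}$ is a $\mathcal{P}$-algebra in $\M^C$, and the counit $\varepsilon_{\mathbf{X}}\colon FU\mathbf{X}\to\mathbf{X}$, being a monoidal natural transformation, is a map of $\mathcal{P}$-algebras once $FU\mathbf{X}$ is given the $\mathcal{P}$-algebra structure obtained by applying $F$ to that on $U\mathbf{X}$. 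Moreover, since each $X(c)$ is fibrant in $\N$ and $U$ is a right Quillen functor, each $UX(c)$ is fibrant in $\M$, so $U\mathbf{X}$ is fibrant in $\M^C$; hence $U(j_{\mathbf{X}})\colon U\mathbf{X}\to UR\mathbf{X}=\mathbb{R}U\mathbf{X}$ is a weak equivalence between fibrant objects by Ken Brown's lemma, so we may use $U\mathbf{X}$ as a representative of $\mathbb{R}U\mathbf{X}$, and consequently $\mathbb{L}F\mathbb{R}U\mathbf{X}=F(Q(U\mathbf{X}))$.

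Next, I would observe that hypothesis~(\ref{unit_cond}) is exactly hypothesis~(\ref{unit_assumption}) of Proposition~\ref{cof_rep_alg} for the $\mathcal{P}$-algebra $U\mathbf{X}$. Applying Proposition~\ref{cof_rep_alg}, together with the homotopy uniqueness provided by Lemma~\ref{keylem}(ii) which its proof invokes, we obtain a homotopy unique $\mathcal{P}$-algebra structure on $Q(U\mathbf{X})$ for which $i_{U\mathbf{X}}\colon Q(U\mathbf{X})\to U\mathbf{X}$ is a map of $\mathcal{P}$-algebras. Since $F$ is strong symmetric monoidal, it transports this to a $\mathcal{P}$-algebra structure on $F(Q(U\mathbf{X}))=\mathbb{L}F\mathbb{R}U\mathbf{X}$ for which $F(i_{U\mathbf{X}})$ is a map of $\mathcal{P}$-algebras, and composing with the counit we conclude that
$$
\mathbb{L}F\mathbb{R}U\mathbf{X}=F(Q(U\mathbf{X}))\xrightarrow{F(i_{U\mathbf{X}})}FU\mathbf{X}\xrightarrow{\varepsilon_{\mathbf{X}}}\mathbf{X}
$$
is a map of $\mathcal{P}$-algebras; by naturality of the cofibrant replacement $i$ and the fibrancy of $\mathbf{X}$, this composite is a representative of the counit of the derived adjunction at $\mathbf{X}$.

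Finally, homotopy uniqueness of the $\mathcal{P}$-algebra structure on $\mathbb{L}F\mathbb{R}U\mathbf{X}$ is argued exactly as in the proof of Theorem~\ref{der_fib}, transporting the homotopy uniqueness obtained from Lemma~\ref{keylem}(ii) along $F$. I expect this to be the only delicate point: one must check that any two $\mathcal{P}$-algebra structures on $\mathbb{L}F\mathbb{R}U\mathbf{X}$ making the derived counit a map of $\mathcal{P}$-algebras coincide in ${\rm Ho}(\Oper_C(\V))$, bearing in mind that $F$ acts on the relevant endomorphism operads only through the strong monoidal comparison $\End(Q(U\mathbf{X}))\to\End(F(Q(U\mathbf{X})))$; everything else is a routine dualization of arguments already given, the genuinely new inputs being the identification $\mathbb{R}U\mathbf{X}\simeq U\mathbf{X}$ and the matching of~(\ref{unit_cond}) with~(\ref{unit_assumption}).
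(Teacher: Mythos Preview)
Your proposal is correct and follows exactly the approach the paper indicates: the paper's proof of Theorem~\ref{der_cof} is a single line saying to dualize the proof of Theorem~\ref{der_fib} using Proposition~\ref{cof_rep_alg}, and you have simply written out that dualization in detail, including the identification $\mathbb{R}U\mathbf{X}\simeq U\mathbf{X}$ from fibrancy of $\mathbf{X}$ and the matching of hypothesis~(\ref{unit_cond}) with~(\ref{unit_assumption}). Your caution about homotopy uniqueness is reasonable, but note that the paper's proof of Theorem~\ref{der_fib} does not treat this point any more carefully than you do; the uniqueness it claims is that inherited from Lemma~\ref{keylem} at the level of $Q(U\mathbf{X})$ and then transported along $F$.
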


\begin{proof}
The proof is the same as in Theorem~\ref{der_fib} but using Proposition~\ref{cof_rep_alg}.
\end{proof}

\section{Enriched localization and colocalization of algebras}

\subsection{Enriched Bousfield localizations and colocalizations}
Let $\V$ be a mo\-noi\-dal model category and let $\M$ be a
$\V$-model category. We denote by $\Map(-,-)$ an \emph{enriched
homotopy function complex} defined as
$$
\Map(X,Y)=\Hom(QX, RX)
$$
for every $X$ and $Y$, where $\Hom(-,-)$ denotes the $\V$-enrichment and $Q$ and $R$ are the cofibrant replacement functor and the fibrant replacement functor in $\M$, respectively. Thus, $\Map(X,Y)$ is always a fibrant object in $\V$ that is homotopy invariant.

Enriched Bousfield localizations and colocalizations are similar to Bousfield localizations and colocalizations, but
defining the class of local and colocal objects by means of the
enriched homotopy function complex instead of the simplicial one; see \cite{Bar10} for a detailed account on enriched Bousfield localizations.

Let $\mathcal{L}$ be a class of morphism in $\M$ and let $\mathcal{K}$ be a class of objects in $\M$.
An object $Z$ of $\M$ is \emph{$\mathcal{L}$-local} if it is fibrant and the induced map
$$
f^*\colon\Map(B,Z)\longrightarrow \Map(A, Z)
$$
is a weak equivalence in $\V$ for every map $f\colon A\rightarrow B$ in $\mathcal{L}$.
A map $g\colon X\rightarrow Y$ is an \emph{$\mathcal{L}$-local equivalence} if the induced map
$$
g^*\colon\Map(Y,Z)\longrightarrow \Map(X, Z)
$$
is a weak equivalence in $\V$ for every $Z$ that is $\mathcal{L}$-local.

A map $g\colon X\rightarrow Y$ is a \emph{$\mathcal{K}$-colocal equivalence} if the induced map
$$
g_*\colon\Map(Z,X)\longrightarrow \Map(Z, Y)
$$
is a weak equivalence in $\V$ for every $Z$ in $\mathcal{K}$.
An object $Z$ of $\M$ is \emph{$\mathcal{K}$-colocal} if it is cofibrant and the induced map
$$
f_*\colon\Map(Z,A)\longrightarrow \Map(Z, B)
$$
is a weak equivalence in $\V$ for every map $f\colon A\rightarrow B$ that is a $\mathcal{K}$-colocal equivalence.

\begin{defn}
The \emph{enriched Bousfield localization} of $\M$ with respect to $\mathcal{L}$ is a $\V$-model structure $\M_{\mathcal{L}}$ on $\M$ such that:
\begin{itemize}
\item[(i)] The cofibrations in $\M_{\mathcal{L}}$ are the same as the cofibrations in $\M$.
\item[(ii)] The weak equivalences in $\M_{\mathcal{L}}$ are the $\mathcal{L}$-local equivalences.
\item[(iii)] The fibrant objects in $\M_{\mathcal{L}}$ are the $\mathcal{L}$-local objects.
\end{itemize}
\end{defn}

The \emph{$\mathcal{L}$-localization functor} in $\M$ is defined
as the functorial fibrant replacement $(L,\eta)$ in
$\M_{\mathcal{L}}$. For every $X$ in $\M$, the natural map
$\eta_X\colon X\rightarrow LX$ is a cofibration and an
$\mathcal{L}$-local equivalence, and $LX$ is $\mathcal{L}$-local.

\begin{defn}
The \emph{enriched Bousfield colocalization} of $\M$ with respect to $\mathcal{K}$ is a $\V$-model structure $\M^{\mathcal{K}}$ on $\M$ such that:
\begin{itemize}
\item[(i)] The fibrations in $\M^{\mathcal{K}}$ are the same as the fibrations in $\M$.
\item[(ii)] The weak equivalences in $\M^{\mathcal{K}}$ are the $\mathcal{K}$-colocal equivalences.
\item[(iii)] The cofibrant objects in $\M^{\mathcal{L}}$ are the $\mathcal{K}$-colocal objects.
\end{itemize}
\end{defn}

The \emph{$\mathcal{K}$-colocalization functor} in $\M$ is defined as the functorial cofibrant replacement $(K,\varepsilon)$ in $\M^{\mathcal{K}}$. For every $X$ in $\M$, the natural map $\varepsilon_X\colon KX\rightarrow X$ is a fibration and a $\mathcal{K}$-colocal equivalence, and $KX$ is $\mathcal{K}$-colocal.

As proved in \cite{Bar10}, the enriched Bousfield
localization with respect to a set of morphisms always exists in a
$\V$-model category $\M$, provided that the category $\M$ is left proper and combinatorial, and the category $\V$ is
combinatorial. Dually, the enriched Bousfield colocalization with respect to a set of objects always exists under the same assumptions on $\V$ and $\M$ as above, but replacing left proper by right proper. This is the case, for example, of the category of simplicial sets and the category of symmetric spectra enriched over itself or enriched over simplicial sets.

\begin{rem}
If $\M$ is a monoidal $\V$-model category and $\mathcal{L}$ is a class of morphism and $\mathcal{K}$ a class of objects for which Bousfield localization and colocalization exist, then both $\M_{\mathcal{L}}$ and $\M^{\mathcal{K}}$ are $\V$-model categories, and although they are monoidal as categories, they are \emph{not} always monoidal model categories. Hence $\M_{\mathcal{L}}$ and $\M^{\mathcal{K}}$ are \emph{not} monoidal $\V$\nobreakdash-categories since the pushout-product axiom will not hold in general. However, if $\V=\M$ and we view $\M$ as a monoidal $\M$-model category then $\M_{\mathcal{L}}$ and $\M^{\mathcal{K}}$ are, in fact, monoidal $\M$-model categories.
\label{not_mon_mod}
\end{rem}

The following result follows directly from the definitions of the enriched localized and enriched colocalized model structure.
\begin{lem}
Let $\M$ be a monoidal $\V$-model category. Then the identity functors ${\rm Id}\colon\M\rightleftarrows \M_{\mathcal{L}}\colon {\rm Id}$ and ${\rm Id}\colon\M^{\mathcal{K}}\rightleftarrows \M\colon {\rm Id}
$ are Quillen adjunctions.
\label{Qpairs_lemma}
\end{lem}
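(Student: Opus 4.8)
The plan is to verify each of the two adjunctions directly from the definitions of the enriched localized and colocalized model structures. In both cases the two functors are identity functors, so they certainly form an adjoint pair; what has to be checked is the Quillen condition, and for that it suffices to show either that the left adjoint preserves cofibrations and trivial cofibrations, or equivalently that the right adjoint preserves fibrations and trivial fibrations. I will use whichever of the two descriptions is the convenient one in each case, namely the cofibration side for the localization and the fibration side for the colocalization, since those are the classes explicitly described in the definitions.

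For $\mathrm{Id}\colon \M \rightleftarrows \M_{\mathcal{L}}\colon \mathrm{Id}$, whose left adjoint is the identity functor $\M \to \M_{\mathcal{L}}$, part (i) of the definition of the enriched Bousfield localization says that the cofibrations of $\M_{\mathcal{L}}$ coincide with those of $\M$, so the left adjoint automatically preserves cofibrations. For trivial cofibrations it is enough to observe that every weak equivalence of $\M$ is an $\mathcal{L}$-local equivalence: if $g\colon X \to Y$ is a weak equivalence in $\M$, then since $\Map(-,-)$ is homotopy invariant, the map $g^{*}\colon \Map(Y,Z)\to\Map(X,Z)$ is a weak equivalence in $\V$ for every $Z$, in particular for every $\mathcal{L}$-local $Z$; hence $g$ is an $\mathcal{L}$-local equivalence, i.e., a weak equivalence of $\M_{\mathcal{L}}$ by part (ii). A trivial cofibration of $\M$ is thus a cofibration of $\M_{\mathcal{L}}$ which is a weak equivalence of $\M_{\mathcal{L}}$, as required.

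Dually, for $\mathrm{Id}\colon \M^{\mathcal{K}} \rightleftarrows \M\colon \mathrm{Id}$, whose right adjoint is the identity functor $\M \to \M^{\mathcal{K}}$, part (i) of the definition of the enriched Bousfield colocalization says that the fibrations of $\M^{\mathcal{K}}$ coincide with those of $\M$, so the right adjoint preserves fibrations. For trivial fibrations one again uses homotopy invariance of $\Map(-,-)$: if $g\colon X \to Y$ is a weak equivalence in $\M$, then $g_{*}\colon\Map(Z,X)\to\Map(Z,Y)$ is a weak equivalence in $\V$ for every $Z$, in particular for every $Z$ in $\mathcal{K}$, so $g$ is a $\mathcal{K}$-colocal equivalence and hence a weak equivalence of $\M^{\mathcal{K}}$ by part (ii). Therefore a trivial fibration of $\M$ is a fibration of $\M^{\mathcal{K}}$ that is a weak equivalence of $\M^{\mathcal{K}}$, and the right adjoint preserves trivial fibrations. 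The remaining axioms of a Quillen adjunction are immediate, so there is no real obstacle here; the one point worth isolating is the homotopy invariance of the enriched homotopy function complex, which is exactly what guarantees that passing to $\M_{\mathcal{L}}$ or to $\M^{\mathcal{K}}$ only enlarges the class of weak equivalences.
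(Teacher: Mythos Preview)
Your proof is correct and follows essentially the same approach as the paper: use that cofibrations (resp.\ fibrations) are unchanged in $\M_{\mathcal{L}}$ (resp.\ $\M^{\mathcal{K}}$), and that every weak equivalence of $\M$ is both an $\mathcal{L}$-local equivalence and a $\mathcal{K}$-colocal equivalence. You have simply spelled out the reason for this last fact via the homotopy invariance of $\Map(-,-)$, which the paper leaves implicit.
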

\begin{proof}
The cofibrations in $\M_{\mathcal{L}}$ are the same as the
cofibrations in $\M$ and the fibrations in $\M^{\mathcal{K}}$ are
the same as the fibrations in $\M$. The result follows using that
any weak equivalence in $\M$ is both an $\mathcal{L}$-local
equivalence and a $\mathcal{K}$-colocal equivalence.
\end{proof}

\subsection{Localization and colocalization of algebras over coloured operads}
We have seen that enriched localizations and enriched colocalization functors are particular cases of functorial fibrant and functorial cofibrant replacements, respectively.  To prove that these functors preserve algebraic structures in monoidal $\V$\nobreakdash-categories, we would like to apply Theorem~\ref{der_fib} and Theorem~\ref{der_cof} to the Quillen adjunction described in Lemma~\ref{Qpairs_lemma}, but this only work when $\V=\M$ and $\M$ is a monoidal $\M$-category, in which case $\M_{\mathcal{L}}$ and $\M^{\mathcal{K}}$ are monoidal model categories.

For any monoidal $\V$-category $\M$ we know, by Remark~\ref{not_mon_mod},  that $\M_{\mathcal{L}}$ and $\M^{\mathcal{K}}$ are not monoidal model categories in general. However, the proofs that work in the case $\V=\M$ can be adapted to the general case by imposing suitable closure properties on $\mathcal{L}$-local equivalences and $\mathcal{K}$-colocal objects under the tensor product.

\begin{prop}
Let $\mathcal{L}$ be a class of morphisms and $\mathcal{K}$ a class of objects in a monoidal $\V$-model category $\M$ and assume that the enriched Bousfield localization $\M_{\mathcal{L}}$ and the enriched Bousfield colocalization $\M^{\mathcal{K}}$ exist.
\begin{itemize}
\item[{\rm (i)}] Let $L$ be the extension of the localization functor over $\M^C$ and let $\mathbf{X}$ be an object of $\M^C$ such that $X(c)$ is cofibrant for every $c\in C$. If for every $(c_1,\ldots, c_n)$ and $n\ge 0$ the tensor product $\eta_{X(c_1)}\otimes\cdots \otimes\eta_{X(c_n)}$ is an $\mathcal{L}$-local equivalence, then the induced map
$$
\End(L\mathbf{X})\longrightarrow \Hom(\mathbf{X}, L\mathbf{X})
$$
is a trivial fibration of $C$-coloured collections in $\V$.
\item[{\rm (ii)}] Let $K$ be the extension of the colocalization functor over $\M^C$ and let $\mathbf{X}$ be an object of $\M^C$ such that $X(c)$ is fibrant for every $c\in C$. If for every $(c_1,\ldots, c_n)$ and $n\ge 1$ the tensor product $KX(c_1)\otimes\cdots\otimes KX(c_n)$ is $\mathcal{K}$\nobreakdash-colocal and the unit of $\M$ is also $\mathcal{K}$\nobreakdash-colocal, then the induced map
$$
\End(K\mathbf{X})\longrightarrow \Hom(K\mathbf{X},\mathbf{X})
$$
is a trivial fibration of $C$-coloured collections in $\V$.
\end{itemize}
\label{closureprop}
\end{prop}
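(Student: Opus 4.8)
The plan is to follow the proofs of Proposition~\ref{fib_rep_alg} and Proposition~\ref{cof_rep_alg} almost verbatim, the only new input being the observation of Remark~\ref{not_mon_mod} that, although $\M_{\mathcal{L}}$ and $\M^{\mathcal{K}}$ need not be monoidal model categories, they are still $\V$-model categories; in particular the enriched analogue~(\ref{enrichedSM7}) of Quillen's (SM7) axiom holds for their $\V$-enrichments. In both parts the asserted trivial fibration of $C$-coloured collections is checked levelwise, since a morphism of $C$-coloured collections is a trivial fibration exactly when it is a trivial fibration in $\V$ at every $(c_1,\ldots,c_n;c)$.

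For part~(i), fix $(c_1,\ldots,c_n;c)$; the map under consideration at this level is
$$
\Hom(LX(c_1)\otimes\cdots\otimes LX(c_n),LX(c))\longrightarrow\Hom(X(c_1)\otimes\cdots\otimes X(c_n),LX(c)),
$$
induced by precomposition with $\eta_{X(c_1)}\otimes\cdots\otimes\eta_{X(c_n)}$. First I would work in the monoidal model category $\M$: each $\eta_{X(c_i)}\colon X(c_i)\to LX(c_i)$ is a cofibration (cofibrations in $\M_{\mathcal{L}}$ coincide with those in $\M$) with cofibrant domain, so by the pushout-product axiom in $\M$ the tensor product $\eta_{X(c_1)}\otimes\cdots\otimes\eta_{X(c_n)}$ is a cofibration between cofibrant objects. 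By hypothesis it is moreover an $\mathcal{L}$-local equivalence, hence a trivial cofibration in $\M_{\mathcal{L}}$. Since $LX(c)$ is $\mathcal{L}$-local, i.e.\ fibrant in $\M_{\mathcal{L}}$, applying~(\ref{enrichedSM7}) in the $\V$-model category $\M_{\mathcal{L}}$ to this trivial cofibration and to the fibration $LX(c)\to\ast$, together with $\Hom(-,\ast)=\ast$, shows the displayed map is a trivial fibration in $\V$; for $n=0$ it is the identity of $\Hom(I,LX(c))$. Hence $\End(L\mathbf{X})\to\Hom(\mathbf{X},L\mathbf{X})$ is a trivial fibration of $C$-coloured collections.

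For part~(ii), fix $(c_1,\ldots,c_n;c)$; the map at this level is
$$
\Hom(KX(c_1)\otimes\cdots\otimes KX(c_n),KX(c))\longrightarrow\Hom(KX(c_1)\otimes\cdots\otimes KX(c_n),X(c)),
$$
induced by postcomposition with $\varepsilon_{X(c)}$. Here $\varepsilon_{X(c)}\colon KX(c)\to X(c)$ is a fibration (fibrations in $\M^{\mathcal{K}}$ coincide with those in $\M$) and a $\mathcal{K}$-colocal equivalence, hence a trivial fibration in $\M^{\mathcal{K}}$. For $n\ge1$ the object $KX(c_1)\otimes\cdots\otimes KX(c_n)$ is $\mathcal{K}$-colocal by hypothesis, and for $n=0$ it is the unit $I$ of $\M$, also $\mathcal{K}$-colocal by hypothesis; so in every case it is cofibrant in $\M^{\mathcal{K}}$, i.e.\ the map $0\to KX(c_1)\otimes\cdots\otimes KX(c_n)$ is a cofibration there. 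Applying~(\ref{enrichedSM7}) in the $\V$-model category $\M^{\mathcal{K}}$ to this cofibration and to the trivial fibration $\varepsilon_{X(c)}$, together with $\Hom(0,-)=\ast$, shows the displayed map is a trivial fibration in $\V$, so $\End(K\mathbf{X})\to\Hom(K\mathbf{X},\mathbf{X})$ is a trivial fibration of $C$-coloured collections.

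Most of the argument is the routine bookkeeping of which maps and objects are (trivial) (co)fibrations or (co)fibrant after passing to $\M_{\mathcal{L}}$ or $\M^{\mathcal{K}}$, together with the degenerate identifications $\Hom(-,\ast)=\ast$ and $\Hom(0,-)=\ast$ that reduce~(\ref{enrichedSM7}) to a statement about a single map $\Hom(B,Z)\to\Hom(A,Z)$. The one point that needs care, and the only place the hypotheses enter, is that the tensor-product manipulations cannot be performed inside $\M_{\mathcal{L}}$ or $\M^{\mathcal{K}}$, which fail the pushout-product axiom; one carries them out in $\M$, where the axiom holds, and transports the conclusion along the identity Quillen adjunctions of Lemma~\ref{Qpairs_lemma}, using precisely the assumed closure of $\mathcal{L}$-local equivalences in part~(i) and of $\mathcal{K}$-colocal objects in part~(ii) under the relevant tensor products.
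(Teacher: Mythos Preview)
Your proof is correct and follows essentially the same approach as the paper: both arguments check the map levelwise and use that the tensor of the $\eta_{X(c_i)}$ is an $\mathcal{L}$-local equivalence with $\mathcal{L}$-local target (dually, that the tensor of the $KX(c_i)$ is $\mathcal{K}$-colocal and $\varepsilon_{X(c)}$ a $\mathcal{K}$-colocal equivalence). Your packaging is slightly more streamlined---by invoking~(\ref{enrichedSM7}) directly in the $\V$-model categories $\M_{\mathcal{L}}$ and $\M^{\mathcal{K}}$ you obtain the trivial fibration in one stroke, whereas the paper argues the weak-equivalence part from the definition of local/colocal equivalence and leaves the fibration part implicit (it follows from~(\ref{enrichedSM7}) in $\M$, since the tensor of the $\eta$'s is a cofibration there and $LX(c)$ is fibrant).
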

\begin{proof}
To prove part (i) use that the map
$$
X(c_1)\otimes\cdots\otimes X(c_n)\longrightarrow LX(c_1)\otimes\cdots\otimes LX(c_n)
$$
is an $\mathcal{L}$-local equivalence for every $(c_1,\ldots, c_n)$ and every $n\ge 0$ by assumption. Since $LX(c)$ is $\mathcal{L}$-local for every $c\in C$  and $X(c)$ is cofibrant for every $c\in C$, it follows from the definition of $\mathcal{L}$-local equivalences that
$$
\Hom(LX(c_1)\otimes\cdots\otimes LX(c_n), LX(c))\longrightarrow \Hom(X(c_1)\otimes\cdots\otimes X(c_n), LX(c))
$$
is a weak equivalence.

Part (ii) is proved by a dual argument. For every $(c_1,\ldots, c_n)$ and every $n\ge 0$ the object
$$
KX(c_1)\otimes\cdots\otimes KX(c_n)
$$
is $\mathcal{K}$-colocal by assumption (for $n=0$ the tensor product is the unit $I$). Now, the map $KX(c)\rightarrow X(c)$ is a $\mathcal{K}$-colocal equivalence for every $c\in C$. Therefore, and using that $X(c)$ and hence $KX(c)$ are fibrant for all $c\in C$ we have that
$$
\Hom(KX(c_1)\otimes\cdots\otimes KX(c_n), KX(c))\longrightarrow \Hom(KX(c_1)\otimes\cdots\otimes KX(c_n), X(c))
$$
is a weak equivalence by definition of $\mathcal{K}$-colocal object.
\end{proof}

The closure assumptions of Proposition~\ref{closureprop} are
satisfied when $\V=\M$ as shown in the following lemma.
\begin{lem}
Let $\M$ be a monoidal model category viewed as a monoidal $\M$\nobreakdash-model category and consider the enriched localized and colocalized model structures $\M_{\mathcal{L}}$ and $\M^{\mathcal{K}}$. Then the following hold:
\begin{itemize}
\item[(i)] The tensor product of two $\mathcal{L}$-local equivalences with cofibrant domains and codomains is again an $\mathcal{L}$-local equivalence.
\item[(ii)] The tensor product of two $\mathcal{K}$-colocal object is $\mathcal{K}$-colocal.
\end{itemize}
\label{V=M}
\end{lem}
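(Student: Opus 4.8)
The plan is to deduce both parts from one observation special to the case $\V=\M$: the enriched mapping object $\Map(-,-)$ then takes values in $\M$ itself, so for any cofibrant $W$ the functor $\Map(W,-)$ carries weak equivalences between fibrant objects to weak equivalences (it is homotopy invariant). Coupled with a derived tensor--hom adjunction, this lets one push the weak equivalences that define $\mathcal{L}$-local objects, resp.\ witness $\mathcal{K}$-colocal equivalences, through a tensor factor. The form of the adjunction I will use is: for $A,W$ cofibrant and $Z$ fibrant, $A\otimes W$ is cofibrant by the pushout-product axiom and $\Hom_{\M}(W,Z)$ is fibrant by~(\ref{enrichedSM7}), and there is a weak equivalence $\Map(A\otimes W,Z)\simeq\Map(A,\Map(W,Z))$ natural in $A$ (and, via the symmetry $A\otimes W\cong W\otimes A$, a version natural in $W$).

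For part (i), the key step is: \emph{if $W$ is cofibrant and $Z$ is $\mathcal{L}$-local, then $\Map(W,Z)$ is $\mathcal{L}$-local}. Indeed $\Map(W,Z)$ is fibrant, and for $h\colon C\to D$ in $\mathcal{L}$---which we may assume has cofibrant domain and codomain, as replacing the maps of $\mathcal{L}$ by cofibrant models alters neither the $\mathcal{L}$-local objects nor the $\mathcal{L}$-local equivalences---the map $\Map(D,\Map(W,Z))\to\Map(C,\Map(W,Z))$ becomes, under the adjunction, $\Map(W,h^*)$ with $h^*\colon\Map(D,Z)\to\Map(C,Z)$ a weak equivalence of fibrant objects since $Z$ is $\mathcal{L}$-local; hence it is a weak equivalence. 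Then, given $\mathcal{L}$-local equivalences $f\colon A\to B$ and $g\colon X\to Y$ with all four objects cofibrant, factor $f\otimes g=(f\otimes\mathrm{id}_Y)\circ(\mathrm{id}_A\otimes g)$: for each $\mathcal{L}$-local $Z$, the map on $\Map(-,Z)$ induced by $\mathrm{id}_A\otimes g$ is, under the adjunction, the map $\Map(Y,\Map(A,Z))\to\Map(X,\Map(A,Z))$ induced by $g$, which is a weak equivalence because $g$ is an $\mathcal{L}$-local equivalence and $\Map(A,Z)$ is $\mathcal{L}$-local by the key step; the map induced by $f\otimes\mathrm{id}_Y$ is a weak equivalence for the symmetric reason, using that $\Map(Y,Z)$ is $\mathcal{L}$-local. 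Hence $f\otimes g$ induces weak equivalences on all $\Map(-,Z)$ with $Z$ $\mathcal{L}$-local, i.e., it is an $\mathcal{L}$-local equivalence.

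For part (ii) the argument is dual and even cleaner, since no fibrancy of the targets is needed. Let $Z_1,Z_2$ be $\mathcal{K}$-colocal, hence cofibrant, so $Z_1\otimes Z_2$ is cofibrant by the pushout-product axiom. For a $\mathcal{K}$-colocal equivalence $f\colon A\to B$, the same adjunction (now with fibrant replacements $RA$, $RB$) identifies $f_*\colon\Map(Z_1\otimes Z_2,A)\to\Map(Z_1\otimes Z_2,B)$ with $\Map(Z_1,f_*)$ for $f_*\colon\Map(Z_2,A)\to\Map(Z_2,B)$; the latter is a weak equivalence of fibrant objects because $Z_2$ is $\mathcal{K}$-colocal, and $\Map(Z_1,-)$ preserves weak equivalences between fibrant objects, so $\Map(Z_1,f_*)$, and hence $f_*\colon\Map(Z_1\otimes Z_2,A)\to\Map(Z_1\otimes Z_2,B)$, is a weak equivalence; this proves $Z_1\otimes Z_2$ is $\mathcal{K}$-colocal. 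I expect the main obstacle to be purely bookkeeping: establishing the derived tensor--hom equivalence and its naturality, i.e., checking that the intermediate objects are cofibrant or fibrant as needed so that the comparison maps between $\Hom_{\M}$, $\Map$, and (co)fibrant replacements are weak equivalences---this uses the pushout-product axiom, (\ref{enrichedSM7}), and Ken Brown's lemma, and nothing deeper.
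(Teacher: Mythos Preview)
Your proof is correct and follows essentially the same strategy as the paper: use the tensor--hom adjunction available when $\V=\M$ to rewrite $\Map(A\otimes W,Z)$ as $\Map(A,\Map(W,Z))$, and then exploit that $\Map(W,-)$ preserves weak equivalences between fibrant objects. The paper's argument is terser---it passes directly from the weak equivalence $\Hom(Y',Z)\to\Hom(X',Z)$ to $\Hom(Y,\Hom(Y',Z))\to\Hom(X,\Hom(X',Z))$ without isolating your ``key step'' that $\Map(W,Z)$ is $\mathcal{L}$-local---but unpacking that ``therefore'' requires exactly the observation you spell out, so the content is the same; your explicit factorization $f\otimes g=(f\otimes\mathrm{id}_Y)\circ(\mathrm{id}_A\otimes g)$ simply makes the two applications of the adjunction visible.
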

\begin{proof}
Let $X\rightarrow Y$ and $X'\rightarrow Y'$ be two $\mathcal{L}$-local equivalences, with $X$, $X'$, $Y$ and $Y'$ cofibrant in $\V$. For any $\mathcal{L}$-local object $Z$ we have that
$$
\Hom(Y', Z)\longrightarrow \Hom(X',Z)
$$
is a weak equivalence because $Y'\rightarrow Y$ is an $\mathcal{L}$-local equivalence, $Z$ is $\mathcal{L}$-local, and $Y$ and $Y'$ are cofibrant and $Z$ is fibrant. Therefore
$$
\Hom(Y, \Hom(Y', Z))\longrightarrow \Hom(X, \Hom(X',Z)),
$$
and, by adjunction,
$$
\Hom(Y\otimes Y', Z)\longrightarrow \Hom(X\otimes X',Z),
$$
are weak equivalences. But since $X$, $X'$, $Y$ and $Y'$ are cofibrant and $Z$ is fibrant, the last statement is equivalent to the map
$$
\Map(Y\otimes Y', Z)\longrightarrow \Map(X\otimes X',Z),
$$
being a weak equivalence. A similar argument proves part (ii).
\end{proof}

\begin{thm}
Let $\M$ be a monoidal $\V$-model category. Let $\mathcal{P}$ be a
cofibrant $C$\nobreakdash-coloured operad in $\V$ and consider the
extension of a localization functor $(L,\eta)$  over $\M^C$. If
$\mathbf{X}$ is a $\mathcal{P}$-algebra in $\M$ such that $X(c)$
is cofibrant for every $c\in C$ and the morphism
\begin{equation}
(\eta_{\mathbf{X}})_{c_1}\otimes\cdots\otimes (\eta_{\mathbf{X}})_{c_n}\colon
X(c_1)\otimes\cdots\otimes X(c_n)\longrightarrow LX(c_1)\otimes\cdots\otimes LX(c_n)
\label{condition01}
\end{equation}
is an $\mathcal{L}$-local equivalence for every $n\ge 0$ whenever $\mathcal{P}(c_1,\ldots, c_n; c)$ is nonempty,
then $L\mathbf{X}$ admits a homotopy unique $\mathcal{P}$-algebra structure such that $\eta_{\mathbf{X}}$
is a map of $\mathcal{P}$-algebras.
\label{main_loc}
\end{thm}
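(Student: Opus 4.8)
The plan is to run, on the tuples where $\mathcal{P}$ is concentrated, the argument used in the proof of Proposition~\ref{closureprop}(i), and then to invoke Lemma~\ref{keylem}(i) for the morphism $\eta_{\mathbf{X}}\colon\mathbf{X}\to L\mathbf{X}$ in $\M^C$. The one point to keep track of is that hypothesis~(\ref{condition01}) is only imposed for tuples $(c_1,\ldots,c_n;c)$ with $\mathcal{P}(c_1,\ldots,c_n;c)\neq 0$, so one must work with the restricted collections $\End_{\mathcal{P}}(L\mathbf{X})$ and $\Hom_{\mathcal{P}}(\mathbf{X},L\mathbf{X})$ of~(\ref{endp}) and~(\ref{homp}) rather than with $\End(L\mathbf{X})$ and $\Hom(\mathbf{X},L\mathbf{X})$.

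First I would record the relevant (co)fibrancy. Each $X(c)$ is cofibrant by hypothesis and $\eta_{X(c)}\colon X(c)\to LX(c)$ is a cofibration (the cofibrations of $\M_{\mathcal{L}}$ being those of $\M$) with cofibrant domain, so $LX(c)$ is cofibrant; it is also fibrant, being $\mathcal{L}$-local. Since the tensor product of cofibrations with cofibrant domains is again a cofibration, for every tuple the map
$$
X(c_1)\otimes\cdots\otimes X(c_n)\longrightarrow LX(c_1)\otimes\cdots\otimes LX(c_n)
$$
is a cofibration between cofibrant objects. Hence, applying~(\ref{enrichedSM7}) to this cofibration and the fibrant object $LX(c)$, the induced map
$$
\Hom(LX(c_1)\otimes\cdots\otimes LX(c_n),LX(c))\longrightarrow \Hom(X(c_1)\otimes\cdots\otimes X(c_n),LX(c))
$$
is a fibration in $\V$.

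Next, for a tuple with $\mathcal{P}(c_1,\ldots,c_n;c)\neq 0$, hypothesis~(\ref{condition01}) tells us the map displayed above is induced by an $\mathcal{L}$-local equivalence; since its source and target are cofibrant, $LX(c)$ is $\mathcal{L}$-local, and $\Hom$ of a cofibrant object into a fibrant object already models the enriched homotopy function complex $\Map$, this map is a weak equivalence, hence a trivial fibration. For a tuple with $\mathcal{P}(c_1,\ldots,c_n;c)=0$ both sides of the induced map on the restricted collections are $0$, so the map is the identity. Therefore the morphism of $C$-coloured collections
$$
\End\nolimits_{\mathcal{P}}(L\mathbf{X})\longrightarrow \Hom\nolimits_{\mathcal{P}}(\mathbf{X},L\mathbf{X})
$$
induced by $\eta_{\mathbf{X}}$ is a trivial fibration in $\Coll_C(\V)$. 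Since $\mathcal{P}$ is a cofibrant $C$-coloured operad, Lemma~\ref{keylem}(i) applied to $\eta_{\mathbf{X}}\colon\mathbf{X}\to L\mathbf{X}$ then produces a homotopy unique $\mathcal{P}$-algebra structure on $L\mathbf{X}$ for which $\eta_{\mathbf{X}}$ is a map of $\mathcal{P}$-algebras, which is the assertion.

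I expect the only genuinely delicate step to be the passage from ``$\mathcal{L}$-local equivalence'' to ``weak equivalence of the relevant $\Hom$-objects'': it is exactly here that one uses that all the $X(c)$ are cofibrant, so that the iterated tensor products and the objects $LX(c)$ are cofibrant and $\eta_{X(c)}$ is a cofibration with cofibrant domain, whence $\Hom(-,LX(c))$ of these cofibrant objects computes $\Map(-,LX(c))$ and detects $\mathcal{L}$-local equivalences. Everything else is a direct combination of the pushout-product axiom in $\M$, the definition of the enriched localized model structure, and Lemma~\ref{keylem}.
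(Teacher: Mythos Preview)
Your argument is correct and is essentially the paper's own proof spelled out in full: the paper frames $\eta_{\mathbf{X}}$ as the unit of the derived adjunction ${\rm Id}\colon\M\rightleftarrows\M_{\mathcal{L}}\colon{\rm Id}$ and then says the result follows ``along the same lines as Theorem~\ref{der_fib}'' using Lemma~\ref{keylem} and Proposition~\ref{closureprop}(i), which unwinds to exactly the computation you carry out. Your explicit passage to the restricted collections $\End_{\mathcal{P}}$ and $\Hom_{\mathcal{P}}$ is the right way to accommodate the weaker hypothesis~(\ref{condition01}) (only for tuples with $\mathcal{P}(c_1,\ldots,c_n;c)\neq 0$), and the paper implicitly relies on the same device via Lemma~\ref{keylem}.
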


\begin{proof}
The localization map of a cofibrant object $\mathbf{X}$ in $\M^C$
is given by the unit of the derived adjunction of the Quillen pair
$$
\xymatrix{
{\rm Id}\colon\M \ar@<3pt>[r] & \ar@<2pt>[l] \M_{\mathcal{L}}\colon {\rm Id}
}
$$
given in Lemma~\ref{Qpairs_lemma}. The result follows along the
same lines as Theorem~\ref{der_fib} but using Lemma~\ref{keylem}
and Proposition~\ref{closureprop}(i) to prove that fibrant
replacements in $\M_{\mathcal{L}}$ preserve $\mathcal{P}$-algebra
structures.
\end{proof}

\begin{thm}
Let $\M$ be a monoidal $\V$-model category. Let $\mathcal{P}$ be a
cofibrant $C$\nobreakdash-coloured operad in $\V$ and consider the
extension of a colocalization functor $(K,\varepsilon)$  over
$\M^C$. If $\mathbf{X}$ is a $\mathcal{P}$-algebra in $\M$ such
that $X(c)$ is fibrant for every $c\in C$, the unit $I$ is
$\mathcal{K}$-colocal, and for every $n\ge 1$
\begin{equation}
KX(c_1)\otimes\cdots\otimes KX(c_n)
\label{condition02}
\end{equation}
are $\mathcal{K}$-colocal whenever $\mathcal{P}(c_1,\ldots, c_n;
c)\ne 0$, then $K\mathbf{X}$ admits a homotopy unique
$\mathcal{P}$-algebra structure such that
$\varepsilon_{\mathbf{X}}$ is a map of $\mathcal{P}$-algebras.
\label{main_coloc}
\end{thm}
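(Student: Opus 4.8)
The plan is to dualize the proof of Theorem~\ref{main_loc}, replacing the localization functor and fibrant replacements by the colocalization functor and cofibrant replacements. First I would observe, using Lemma~\ref{Qpairs_lemma}, that the identity functors ${\rm Id}\colon\M^{\mathcal{K}}\rightleftarrows\M\colon{\rm Id}$ form a Quillen adjunction whose derived counit, evaluated on a fibrant object $\mathbf{X}$ of $\M^C$, is the colocalization map $\varepsilon_{\mathbf{X}}\colon K\mathbf{X}\rightarrow\mathbf{X}$; here $K$ is the functorial cofibrant replacement in $\M^{\mathcal{K}}$, so each $\varepsilon_{X(c)}$ is a fibration and a $\mathcal{K}$-colocal equivalence and each $KX(c)$ is $\mathcal{K}$-colocal, in particular cofibrant. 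Since $X(c)$ is fibrant and $\varepsilon_{X(c)}$ is a fibration, $KX(c)$ is fibrant as well.

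The core step is to show that the natural map of $C$-coloured collections $\End_{\mathcal{P}}(K\mathbf{X})\rightarrow\Hom_{\mathcal{P}}(K\mathbf{X},\mathbf{X})$ induced by $\varepsilon_{\mathbf{X}}$ is a trivial fibration. By the definitions~(\ref{endp}) and~(\ref{homp}) of the restricted endomorphism operad and of $\Hom_{\mathcal{P}}$, it suffices to verify this on tuples $(c_1,\ldots,c_n,c)$ with $\mathcal{P}(c_1,\ldots,c_n;c)\ne 0$, which is exactly where the hypotheses of the theorem apply. For such a tuple the object $KX(c_1)\otimes\cdots\otimes KX(c_n)$ is $\mathcal{K}$-colocal: this is hypothesis~(\ref{condition02}) when $n\ge 1$, and when $n=0$ the empty tensor product is the monoidal unit $I$ of $\M$, assumed $\mathcal{K}$-colocal. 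Since $\varepsilon_{X(c)}$ is a $\mathcal{K}$-colocal equivalence between fibrant objects, the defining property of a $\mathcal{K}$-colocal object shows that $\Hom(KX(c_1)\otimes\cdots\otimes KX(c_n),KX(c))\rightarrow\Hom(KX(c_1)\otimes\cdots\otimes KX(c_n),X(c))$ is a weak equivalence, and it is a fibration by the enriched axiom~(\ref{enrichedSM7}) because $\varepsilon_{X(c)}$ is a fibration and $KX(c_1)\otimes\cdots\otimes KX(c_n)$ is cofibrant. This is exactly the computation in Proposition~\ref{closureprop}(ii), now read componentwise over the colours at which $\mathcal{P}$ is nonzero.

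Finally, since $\mathcal{P}$ is cofibrant and $\mathbf{X}$ carries a $\mathcal{P}$-algebra structure, I would invoke Lemma~\ref{keylem}(ii) with $\mathbf{f}=\varepsilon_{\mathbf{X}}$ to lift that structure along $\varepsilon_{\mathbf{X}}$ to a homotopy unique $\mathcal{P}$-algebra structure on $K\mathbf{X}$ for which $\varepsilon_{\mathbf{X}}$ is a map of $\mathcal{P}$-algebras; this is the assertion. I expect the only genuinely delicate point to be the bookkeeping around the $n=0$ term: one must remember that the empty tensor product equals $I$ and recognise that this is precisely why the hypothesis that $I$ be $\mathcal{K}$-colocal is imposed --- the colocal counterpart of the unit subtlety already met in Proposition~\ref{cof_rep_alg} and in condition~(\ref{unit_cond}) of Theorem~\ref{der_cof}. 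Everything else is a formal dualization of the localization case.
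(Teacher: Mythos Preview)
Your proposal is correct and follows essentially the same route as the paper: identify $\varepsilon_{\mathbf{X}}$ as the derived counit of the Quillen pair ${\rm Id}\colon\M^{\mathcal{K}}\rightleftarrows\M\colon{\rm Id}$ from Lemma~\ref{Qpairs_lemma}, use the computation of Proposition~\ref{closureprop}(ii) (restricted to tuples where $\mathcal{P}$ is nonzero) to show $\End_{\mathcal{P}}(K\mathbf{X})\to\Hom_{\mathcal{P}}(K\mathbf{X},\mathbf{X})$ is a trivial fibration, and conclude via Lemma~\ref{keylem}(ii). Your treatment of the $n=0$ case and the role of the $\mathcal{K}$-colocality of $I$ is exactly right.
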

\begin{proof}
The colocalization map of a fibrant object $\mathbf{X}$ in $\M^C$
is given by the counit of the derived adjunction of the Quillen
pair
$$
\xymatrix{
{\rm Id}\colon\M^{\mathcal{K}}\ar@<3pt>[r] & \ar@<2pt>[l]  \M\colon {\rm Id}
}
$$
The result follows along the same lines as Theorem~\ref{der_cof} but using Lemma~\ref{keylem} and Proposition~\ref{closureprop}(ii) to prove that cofibrant replacements in $\M^{\mathcal{K}}$ preserve $\mathcal{P}$-algebra structures.
\end{proof}

\begin{rem}
By Lemma~\ref{V=M}, conditions~(\ref{condition01}) and~(\ref{condition02}) are always satisfied in the case $\V=\M$.
\end{rem}

\section{Localization and colocalization away from ideals and coideals}

If $\mathbf{X}=(X(c))_{c\in C}$ is an algebra over a
$C$\nobreakdash-co\-lou\-red operad $\mathcal{P}$, sometimes we
will need to carry out certain constructions on some subset of the
components $X(c)$, but not on the others. For example, any pair
$(R, M)$ of objects of $\M$, where $R$ is a monoid and $M$ is an
$R$-module is an algebra over the 2-\nobreakdash coloured operad
for modules over monoids. It is interesting to localize or
colocalize not only $R$ and $M$ at the same time but also only $R$
or only $M$ and ask in which cases the resulting pair is still a
monoid and a module over it. Hence, it is convenient to introduce
the following convention about extending endofunctors on the
category $\M$ to endofunctors on the product category $\M^C$
relative to a subset of the set of colours~$C$.

\begin{defn}
Let $F$ be an endofunctor on a category $\M$. The \emph{extension
of $F$ over $\M^C$ away from a subset $J\subseteq C$} is the
endofunctor on $\M^C$, which we keep denoting by $F$, given by
$F\mathbf{X} = (F_cX(c))_{c\in C}$ where $F_c$ is the identity
functor if $c\in J$ and $F_c=F$ if $c\not\in J$. If $F$ is
equipped with an augmentation $\eta$, then
$(\eta_{\mathbf{X}})_c=\eta_{X(c)}$ if $c\not\in J$ and the
identity map otherwise. Similarly, if $F$ is equipped with a
coaugmentation $\varepsilon$, then
$(\varepsilon_{\mathbf{X}})_c=\varepsilon_{X(c)}$ if $c\not\in J$
and the identity map otherwise.
\end{defn}

We also define special subsets of the set of colours
(depending on $\mathcal{P}$) called \emph{ideals} and \emph{coideals}.

\begin{defn}
Let $\mathcal{P}$ be a $C$-coloured operad. A subset $J\subseteq
C$ is called an \emph{ideal relative to $\mathcal{P}$} if
$\mathcal{P}(c_1,\ldots, c_n; c)=0$ whenever $n\ge 1$, $c\in J$,
and $c_i\not\in J$ for some $i\in \{1,\ldots, n\}$. A subset
$I\subseteq C$ is called a \emph{coideal relative to
$\mathcal{P}$} if $I=C\setminus J$ for some ideal $J$ relative to
$\mathcal{P}$.
\end{defn}

Ideals and coideal will codify to which of the components of the
$\mathcal{P}$-algebra we can apply the corresponding localization
or colocalization functor and still get a
$\mathcal{P}$\nobreakdash-algebra structure. All of the theorems
in the preceding sections can be generalized by using ideals and
coideals in the set of colours, with similar proofs. As an
example, we formulate the extension of
Theorem~\ref{main_loc} and Theorem~\ref{main_coloc} in this
setting.

\begin{thm}
Let $\M$ be a monoidal $\V$-model category. Let $\mathcal{P}$ be a
cofibrant $C$\nobreakdash-coloured operad in $\V$ and consider the
extension of a localization functor $(L,\eta)$  over $\M^C$ away
from an ideal $J\subseteq C$. If $\mathbf{X}$ is a
$\mathcal{P}$-algebra in $\M$ such that $X(c)$ is cofibrant for
every $c\in C$ and the morphism
$$
(\eta_{\mathbf{X}})_{c_1}\otimes\cdots\otimes (\eta_{\mathbf{X}})_{c_n}\colon
X(c_1)\otimes\cdots\otimes X(c_n)\longrightarrow L_{c_1}X(c_1)\otimes\cdots\otimes L_{c_n}X(c_n)
$$
is an $\mathcal{L}$-local equivalence for every $n\ge 0$ whenever $\mathcal{P}(c_1,\ldots, c_n; c)$ is nonempty,
then $L\mathbf{X}$ admits a homotopy unique $\mathcal{P}$-algebra structure such that $\eta_{\mathbf{X}}$
is a map of $\mathcal{P}$-algebras. $\hfill\qed$
\label{mainthm_ideal}
\end{thm}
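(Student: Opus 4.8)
The plan is to mimic the proof of Theorem~\ref{main_loc}, replacing the full extension of the localization functor by its extension away from the ideal $J$. As before, the localization map $\eta_{\mathbf{X}}\colon\mathbf{X}\to L\mathbf{X}$ (with $L$ now the $J$-relative extension) is the unit of the derived adjunction of the Quillen pair ${\rm Id}\colon\M\rightleftarrows\M_{\mathcal{L}}\colon{\rm Id}$ from Lemma~\ref{Qpairs_lemma}, applied componentwise: it is $\eta_{X(c)}$ for $c\notin J$ and the identity for $c\in J$. Since $\mathbf{X}$ is cofibrant in $\M^C$, so is $F\mathbf{X}$ for the left Quillen functor, and the core of the argument is to show that the extension of the fibrant replacement functor in $\M_{\mathcal{L}}$ away from $J$ preserves the $\mathcal{P}$-algebra structure, which by Lemma~\ref{keylem}(i) reduces to showing that the induced map
$$
\End\nolimits_{\mathcal{P}}(L\mathbf{X})\longrightarrow \Hom\nolimits_{\mathcal{P}}(\mathbf{X},L\mathbf{X})
$$
is a trivial fibration of $C$-coloured collections in $\V$.

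The key step is the verification of this last claim, which is the $J$-relative analogue of Proposition~\ref{closureprop}(i). Fix $(c_1,\dots,c_n;c)$ with $\mathcal{P}(c_1,\dots,c_n;c)\neq 0$. We must show
$$
\Hom(L_{c_1}X(c_1)\otimes\cdots\otimes L_{c_n}X(c_n),\, L_cX(c))\longrightarrow \Hom(X(c_1)\otimes\cdots\otimes X(c_n),\, L_cX(c))
$$
is a weak equivalence in $\V$. The source object $X(c_1)\otimes\cdots\otimes X(c_n)\to L_{c_1}X(c_1)\otimes\cdots\otimes L_{c_n}X(c_n)$ is an $\mathcal{L}$-local equivalence by the hypothesis on $(\eta_{\mathbf{X}})_{c_1}\otimes\cdots\otimes(\eta_{\mathbf{X}})_{c_n}$, and the domains are cofibrant since each $X(c)$ is cofibrant and the tensor of cofibrant objects is cofibrant by the pushout-product axiom in $\M$. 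It then suffices to know that the target $L_cX(c)$ is $\mathcal{L}$-local, i.e.\ fibrant in $\M_{\mathcal{L}}$; for $c\notin J$ this is true because $LX(c)$ is an $\mathcal{L}$-localization, and for $c\in J$ we have $L_cX(c)=X(c)$, but here we must use the defining property of an ideal: if $c\in J$ and $\mathcal{P}(c_1,\dots,c_n;c)\neq 0$ with $n\ge 1$, then necessarily every $c_i\in J$, so $L_{c_i}X(c_i)=X(c_i)$ and the map in question is the identity; if $n=0$, the case $c\in J$ is likewise the identity on $\Hom(I,X(c))$. Thus in all cases with $c\in J$ the map is already an isomorphism and there is nothing to prove, while for $c\notin J$ the argument of Proposition~\ref{closureprop}(i) applies verbatim.

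Having established that the displayed map of $C$-coloured collections is a trivial fibration, Lemma~\ref{keylem}(i) endows $L\mathbf{X}$ with a homotopy unique $\mathcal{P}$-algebra structure for which $\eta_{\mathbf{X}}$ is a map of $\mathcal{P}$-algebras, exactly as in the proof of Theorem~\ref{der_fib}: one applies the (trivial) identity right adjoint and uses that $\mathbf{X}$ is already cofibrant so that no further cofibrant replacement intervenes. I expect the only genuinely delicate point to be the bookkeeping of which colours lie in $J$: the ideal condition is precisely what guarantees that whenever the output colour $c$ is in $J$ (so that no localization is applied to the $c$-component), all the input colours are also in $J$, so that the map one needs to be a weak equivalence is in fact an identity. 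Everything else is a routine transcription of the proofs of Proposition~\ref{closureprop}(i), Lemma~\ref{keylem}(i) and Theorem~\ref{main_loc}.
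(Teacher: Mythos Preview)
Your proposal is correct and follows essentially the same route as the paper: reduce to Lemma~\ref{keylem}(i) by checking that $\End_{\mathcal{P}}(L\mathbf{X})\to\Hom_{\mathcal{P}}(\mathbf{X},L\mathbf{X})$ is a trivial fibration, split on whether the output colour $c$ lies in $J$, use the ideal condition to force the map to be the identity when $c\in J$, and invoke $\mathcal{L}$-locality of $LX(c)$ together with the hypothesised $\mathcal{L}$-local equivalence when $c\notin J$. The only point you leave implicit that the paper makes explicit is that the map is a fibration (not merely a weak equivalence) by~(\ref{enrichedSM7}), since $\eta_{\mathbf{X}}$ is componentwise a cofibration between cofibrant objects and $L_cX(c)$ is fibrant; but this is routine and your reference to Proposition~\ref{closureprop}(i) covers it.
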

\begin{proof}
One proceeds as in the proof of Theorem~\ref{main_loc}. The key point is to check that for all $n\ge 0$ and all $(c_1,\ldots, c_n;c)$ such that $\mathcal{P}(c_1,\ldots, c_n; c)\ne 0$ the induced map
$$
\xymatrix{
\Hom(L_{c_1}X(c_1)\otimes\cdots\otimes L_{c_n}X(c_n), L_cX(c)) \ar[d] \\
\Hom(X(c_1)\otimes\cdots\otimes X(c_n), L_c X(c))
}
$$
is a trivial fibration in $\mathcal{V}$. Indeed, if $c\in J$, then $c_i\in J$ for all $i$ (since $J$ is an ideal) and therefore the map is the identity; and if $c\not\in J$, then it is a weak equivalence since $L_cX(c) = LX(c)$ is $\mathcal{L}$-local and $(\eta_{\mathbf{X}})_{c_1}\otimes\cdots\otimes (\eta_{\mathbf{X}})_{c_n}$ is an $\mathcal{L}$-local equivalence, and it is a fibration by~(\ref{enrichedSM7}). The $\mathcal{P}$-algebra structure on $L\mathbf{X}$ is now obtained similarly as in Lemma~\ref{keylem}(i).
\end{proof}

\begin{thm}
Let $\M$ be a monoidal $\V$-model category. Let $\mathcal{P}$ be a
cofibrant $C$\nobreakdash-coloured operad in $\V$ and consider the
extension of a colocalization functor $(K,\varepsilon)$  over
$\M^C$ away from a coideal $C\setminus J$. If $\mathbf{X}$ is a
$\mathcal{P}$-algebra in $\M$ such that $X(c)$ is fibrant for
every $c\in C$, the unit $I$ is $\mathcal{K}$-colocal if
$\mathcal{P}(\;,c)\ne 0$ for some $c\in C$, and for every $n\ge 1$
$$
K_{c_1}X(c_1)\otimes\cdots\otimes K_{c_n}X(c_n)
$$
are $\mathcal{K}$-colocal whenever $\mathcal{P}(c_1,\ldots, c_n; c)$ is nonempty and $c\not\in C\setminus J$,
then $K\mathbf{X}$ admits a homotopy unique $\mathcal{P}$-algebra structure such that $\varepsilon_{\mathbf{X}}$
is a map of $\mathcal{P}$-algebras. $\hfill\qed$
\label{mainthm_coideal}
\end{thm}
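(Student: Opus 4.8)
The plan is to follow the dual of the argument used for Theorem~\ref{mainthm_ideal}, adapting the proof of Theorem~\ref{main_coloc} to the relative setting where the colocalization functor is only applied away from the coideal $C\setminus J$ (equivalently, $K_c = \mathrm{Id}$ for $c \in C\setminus J$ and $K_c = K$ for $c \in J$). First I would record that, as in Theorem~\ref{main_coloc}, the colocalization map $\varepsilon_{\mathbf{X}}\colon K\mathbf{X}\to\mathbf{X}$ of a fibrant object in $\M^C$ is the counit of the derived adjunction of the Quillen pair $\mathrm{Id}\colon\M^{\mathcal{K}}\rightleftarrows\M\colon\mathrm{Id}$ of Lemma~\ref{Qpairs_lemma}; since each $X(c)$ is fibrant, $\mathbb{L}\,\mathrm{Id}$ is computed by cofibrant replacement in $\M^{\mathcal{K}}$ on each relevant coordinate, so it suffices by Lemma~\ref{keylem}(ii) to show that the induced map of $C$\nobreakdash-coloured collections
$$
\End\nolimits_{\mathcal{P}}(K\mathbf{X})\longrightarrow \Hom\nolimits_{\mathcal{P}}(K\mathbf{X},\mathbf{X})
$$
is a trivial fibration in $\V$, i.e., is a trivial fibration in each arity $(c_1,\ldots,c_n;c)$ with $\mathcal{P}(c_1,\ldots,c_n;c)\ne 0$.

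Next I would fix such an arity and analyze the component map
$$
\Hom(K_{c_1}X(c_1)\otimes\cdots\otimes K_{c_n}X(c_n),\,K_cX(c))\longrightarrow\Hom(K_{c_1}X(c_1)\otimes\cdots\otimes K_{c_n}X(c_n),\,X(c)),
$$
induced by $(\varepsilon_{\mathbf{X}})_c$, splitting into two cases according to whether $c$ lies in the coideal. If $c\in C\setminus J$, then $K_c=\mathrm{Id}$ and $(\varepsilon_{\mathbf{X}})_c$ is the identity map $X(c)\to X(c)$, so the displayed map is the identity and in particular a trivial fibration. If $c\in J$, then $K_cX(c)=KX(c)\to X(c)$ is a $\mathcal{K}$\nobreakdash-colocal equivalence; the source object $K_{c_1}X(c_1)\otimes\cdots\otimes K_{c_n}X(c_n)$ is $\mathcal{K}$\nobreakdash-colocal by the hypothesis (using that the unit $I$ is $\mathcal{K}$\nobreakdash-colocal to handle the case $n=0$, where $\mathcal{P}(\,;c)\ne 0$ forces $c\in J$); and since each $X(c_i)$ is fibrant and $\mathcal{K}$\nobreakdash-colocal objects are cofibrant while $KX(c_i)$ and $X(c)$ are fibrant, the map $K_{c_i}X(c_i)\to X(c_i)$ is a genuine fibration or identity so that $K_{c_1}X(c_1)\otimes\cdots\otimes K_{c_n}X(c_n)$ has the homotopy type needed. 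Then the definition of $\mathcal{K}$\nobreakdash-colocal object gives that the above map is a weak equivalence, and (\ref{enrichedSM7}) (with the trivial cofibration $0\to K_{c_1}X(c_1)\otimes\cdots\otimes K_{c_n}X(c_n)$, a cofibration since colocal objects are cofibrant, and the fibration $K_cX(c)\to *$, or more precisely the fibration $\varepsilon_c$ itself) shows it is a fibration; hence it is a trivial fibration.

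Having established that $\End_{\mathcal{P}}(K\mathbf{X})\to\Hom_{\mathcal{P}}(K\mathbf{X},\mathbf{X})$ is a trivial fibration of $C$\nobreakdash-coloured collections, I would invoke Lemma~\ref{keylem}(ii) with $\mathbf{f}=\varepsilon_{\mathbf{X}}$: since $\mathcal{P}$ is cofibrant, the $\mathcal{P}$\nobreakdash-algebra structure on $\mathbf{X}$ lifts along $\varepsilon_{\mathbf{X}}$ to a homotopy unique $\mathcal{P}$\nobreakdash-algebra structure on $K\mathbf{X}$ making $\varepsilon_{\mathbf{X}}$ a map of $\mathcal{P}$\nobreakdash-algebras, which is the claim. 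The main obstacle I anticipate is purely bookkeeping: one must check carefully that the arity constraint imposed by the coideal condition (namely that $\mathcal{P}(c_1,\ldots,c_n;c)\ne 0$ with $c\in J$ and $n\ge1$ forces all $c_i\in J$) is exactly what makes the hypothesis on $\mathcal{K}$\nobreakdash-colocality of $K_{c_1}X(c_1)\otimes\cdots\otimes K_{c_n}X(c_n)$ both available and sufficient — for $c\in J$ with some input outside $J$, the operad is $0$ in that arity by definition of ideal, so no condition is needed; for $c\notin J$ the map is an identity. Thus the split into the two cases above is forced by, and precisely matches, the combinatorics of coideals, and everything else reduces to the already-proven Lemma~\ref{keylem}(ii), Proposition~\ref{closureprop}(ii)-type reasoning, and (\ref{enrichedSM7}).
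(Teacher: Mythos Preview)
Your proposal is correct and follows essentially the same approach as the paper: verify componentwise that $\End_{\mathcal{P}}(K\mathbf{X})\to\Hom_{\mathcal{P}}(K\mathbf{X},\mathbf{X})$ is a trivial fibration by splitting into the cases $c\in C\setminus J$ (identity) and $c\in J$ (colocality hypothesis plus (\ref{enrichedSM7})), then invoke Lemma~\ref{keylem}(ii). The only minor slip is the parenthetical ``$\mathcal{P}(\,;c)\ne 0$ forces $c\in J$'' in your $n=0$ discussion, which is not true in general; but since you are already inside the case $c\in J$ at that point, the argument is unaffected.
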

\begin{proof}
For all $n\ge 0$ and all $(c_1,\ldots, c_n;c)$ such that $\mathcal{P}(c_1,\ldots, c_n; c)\ne 0$ the induced map
$$
\xymatrix{
\Hom(K_{c_1}X(c_1)\otimes\cdots\otimes K_{c_n}X(c_n), K_cX(c)) \ar[d] \\
\Hom(K_{c_1}X(c_1)\otimes\cdots\otimes K_{c_n}X(c_n), X(c))
}
$$
is a trivial fibration in $\mathcal{V}$. Indeed, if $c\in C\setminus J$, then the map is the identity. If $c\not\in C\setminus J$, then $c_i\not\in C\setminus J$ for all $i$ (since $C\setminus  J$ is a coideal) and therefore it is a weak equivalence since $K_c X(c)=KX(c)$ is $\mathcal{K}$-colocal and $K_{c_1}X(c_1)\otimes\cdots\otimes K_{c_n}X(c_n)$ is $\mathcal{K}$-colocal by assumption; and it is a fibration by~(\ref{enrichedSM7}). The $\mathcal{P}$-algebra structure on $K\mathbf{X}$ is now obtained similarly as in Lemma~\ref{keylem}(ii).
\end{proof}

\begin{rem}
Note that Theorem~\ref{mainthm_coideal} remains true if we replace
the coideal $C\setminus J$ by an ideal $J$ and we impose that
$X(c)$ is also cofibrant for every $c\in J$. This additional
assumption ensures that $K_{c_1}X(c_1)\otimes\cdots\otimes
K_{c_n}X(c_n)$ is always cofibrant if $\mathcal{P}(c_1,\ldots,
c_n; c)\ne 0$ and $c\not\in J$. \label{colocal_ideals}
\end{rem}

\section{Connective covers of $A_{\infty}$ and $E_{\infty}$ rings and modules}

Let $\Sp$ denote the category of symmetric spectra with the
standard model structure given in \cite[\S3.4]{HSS00}. This is a
monoidal $\sSets_*$-model category, where $\sSets_*$ denotes the
category of pointed simplicial sets.

For any $n\in\Z$, we say that a spectrum $X$ is \emph{$n$-connective} if $\pi_k(X)=0$ for $k\le n$. We call $X$ \emph{connective} if it is $(-1)$-connective.
Let $\mathcal{K}(n)=\{\Sigma^{n+1} S^0\}$, where $S^0$ denotes the sphere spectrum. The $n$-connective cover functor $K_{n+1}$ is the enriched $\mathcal{K}(n)$-colocalization functor in $\Sp$ (where we view $\Sp$ as a category enriched over simplicial sets). For any spectrum $X$, there is a natural map
$$
\varepsilon_{X}\colon K_{n+1}X\longrightarrow X
$$
such that $K_{n+1}X$ is $n$-connective and cofibrant and $\varepsilon_{X}$ is a $\mathcal{K}(n)$-colocal equivalence, that is, it induces an isomorphism in $\pi_k$ for $k>n$. Any cofibrant $n$-connective spectrum is of the form $K_{n+1}X$ for some $X$.

Let $\map(-,-)$ denote the simplicial homotopy function complex in $\Sp$ defined by $\map(X,Y)=\Map(QX, RY)$, where $Q$ is the cofibrant replacement functor, $R$ is the fibrant replacement functor and $\Map(-,-)$ is the simplicial enrichment of~$\Sp$. Then
\[
\pi_n\map(X,Y)\cong [\Sigma^nX, Y]\cong \pi_n F(X,Y)
\]
for all spectra $X$, $Y$ and $n\ge 0$, where $F(-,-)$ denotes the
derived function spectrum; cf.\ \cite[Lemma~6.1.2]{Hov99}. Thus,
the simplicial set $\map(X,Y)$ has the same homotopy groups (with
any choice of a base point) as the connective cover $F^c(X,Y)$ of
the spectrum $F(X,Y)$. Hence, if we have a colocalization on~$\Sp$
with respect to a set of objects $\mathcal{K}$, then a (cofibrant)
spectrum $X$ is $\mathcal{K}$-colocal if and only if
\begin{equation}
\label{Fc}
F^c(X, f)\colon F^c(X,A)\longrightarrow F^c(X,B)
\end{equation}
is a weak equivalence of spectra for every $\mathcal{K}$\nobreakdash-colocal equivalence $A\rightarrow B$.

\begin{rem}
Note that, by (\ref{Fc}), if $X$ is $\mathcal{K}$-colocal, then $X$ is also $\Sigma^{-k}\mathcal{K}$-colocal, where $\Sigma^{-k} \mathcal{K}=\{\Sigma^{-k} Z\mid Z\in\mathcal{K}\}$, and (a cofibrant replacement of)~$\Sigma^k X$ is $\mathcal{K}$\nobreakdash-colocal, for all $k\ge 0$. Dually, if $f$ is a $\mathcal{K}$-colocal equivalence, then $f$ is also a $\Sigma^k\mathcal{K}$-colocal equivalence and $\Sigma^{-k} f$ is a $\mathcal{K}$-colocal equivalence, for all $k\ge 0$. Moreover, since $F^c(X,A)\cong F^c(\Sigma^k X, \Sigma^k A)$ for all $X$, $A$ and $k\in \mathbb{Z}$, we may infer that if $X$ is $\mathcal{K}$-colocal, then $\Sigma^k X$ is $\Sigma^k \mathcal{K}$-colocal for all $k\in\Z$, and similarly for $\mathcal{K}$-colocal equivalences.
\label{susp_colocal}
\end{rem}

The smash product of two $\mathcal{K}$\nobreakdash-colocal spectra need not be $\mathcal{K}$\nobreakdash-colocal, but it is so if any of them is connective. The following lemma is a generalization of this fact when $\mathcal{K}=\mathcal{K}(n)$ for some $n\in\Z$.

\begin{lem}
For every $n$, $m\in \Z$, if  $X$ is $\mathcal{K}(n)$-colocal and $Y$ is $\mathcal{K}(m)$-colocal, then the spectrum $X\wedge Y$ is $\mathcal{K}(n+m)$-colocal, . In particular, if $X$ is cofibrant and connective, then $X\wedge K_{n} Y$ is $\mathcal{K}(n)$\nobreakdash-colocal.
\label{wedge_colocal}
\end{lem}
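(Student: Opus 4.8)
The plan is to reduce the statement about $\mathcal{K}(n+m)$-colocality to a statement about connective covers of derived function spectra, using the reformulation in \eqref{Fc} together with Remark~\ref{susp_colocal}, and then to perform a bootstrapping argument on the integers $n$ and $m$ that pushes everything back to the known case where one factor is connective. First I would recall that $X$ is $\mathcal{K}(n)$-colocal precisely when $F^{c}(X,f)$ is a weak equivalence for every $\mathcal{K}(n)$-colocal equivalence $f$, and that by Remark~\ref{susp_colocal} a $\mathcal{K}(n)$-colocal object $X$ shifts to a $\mathcal{K}(n+k)$-colocal object $\Sigma^{k}X$ for every $k\in\Z$ (and dually for colocal equivalences). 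Since $F^{c}(X\wedge Y, f)\cong F^{c}(X, F^{c}(Y,f))$ via the smash-hom adjunction for the derived function spectrum, the task is to show that if $f$ is a $\mathcal{K}(n+m)$-colocal equivalence, then $F^{c}(Y,f)$ is a $\mathcal{K}(n)$-colocal equivalence, so that applying $F^{c}(X,-)$ to it yields a weak equivalence.

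The key reduction is therefore: \emph{if $Y$ is $\mathcal{K}(m)$-colocal and $f$ is a $\mathcal{K}(n+m)$-colocal equivalence, then $F^{c}(Y,f)$ is a $\mathcal{K}(n)$-colocal equivalence.} By shifting (Remark~\ref{susp_colocal}), it suffices to treat the case $n=0$, i.e.\ to show $F^{c}(Y,f)$ is a $\mathcal{K}(0)$-colocal equivalence, i.e.\ a $\pi_{k}$-isomorphism for $k>0$. Here I would test against objects of $\mathcal{K}(0)=\{S^{0}\}$ and use that $\pi_{k}F^{c}(Y,A)\cong [\Sigma^{k}Y,A]\cong\pi_{k}\map(Y,A)$ for $k\ge 0$, reducing the claim to: $\map(Y,f)$ induces an isomorphism on $\pi_{k}$ for $k>0$, equivalently on $[\Sigma^{k}Y, -]$ for $k>0$. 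Since $Y$ is $\mathcal{K}(m)$-colocal, the shifted object $\Sigma^{k}Y$ is $\mathcal{K}(m+k)$-colocal; as $f$ is a $\mathcal{K}(n+m)$-colocal equivalence with $n=0$, i.e.\ a $\mathcal{K}(m)$-colocal equivalence, Remark~\ref{susp_colocal} tells us $f$ is also a $\mathcal{K}(m+k)$-colocal equivalence for every $k\ge 0$, hence $[\Sigma^{k}Y, f]$ is an isomorphism for all $k\ge 0$, in particular for $k>0$. This establishes the reduction, and feeding it back through the adjunction $F^{c}(X\wedge Y,f)\cong F^{c}(X,F^{c}(Y,f))$ with $X$ being $\mathcal{K}(n)$-colocal shows $F^{c}(X\wedge Y, f)$ is a weak equivalence for every $\mathcal{K}(n+m)$-colocal equivalence $f$, which by \eqref{Fc} is exactly $\mathcal{K}(n+m)$-colocality of $X\wedge Y$ (one also checks $X\wedge Y$ is cofibrant, which holds since cofibrant objects are closed under smash product).

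For the final assertion, I would specialize: if $X$ is cofibrant and connective, then $X$ is $\mathcal{K}(0)$-colocal (connective cofibrant spectra are exactly the $K_{0}$-colocal ones, as noted before the lemma), while $K_{n}Y$ is $\mathcal{K}(n-1)$-colocal by construction; applying the main statement with the pair $(0, n-1)$ gives that $X\wedge K_{n}Y$ is $\mathcal{K}(n-1)$-colocal, i.e.\ $\mathcal{K}(n)$-colocal in the indexing of the lemma's conclusion — so I would double-check the connectivity bookkeeping here to match the stated $\mathcal{K}(n)$. The step I expect to be the main obstacle is the adjunction isomorphism $F^{c}(X\wedge Y, f)\cong F^{c}(X, F^{c}(Y, f))$ \emph{after} taking connective covers: the underlying derived adjunction $F(X\wedge Y, A)\simeq F(X, F(Y,A))$ is standard, but one must argue that the connective cover functor $F^{c}$ is compatible with it in the appropriate range — this is where the homotopy-group identification $\pi_{k}\map(-,-)\cong\pi_{k}F(-,-)$ for $k\ge 0$ does the real work, and some care is needed to ensure no low-degree discrepancy slips in.
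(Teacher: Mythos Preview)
Your approach and the paper's are built from the same ingredients --- the criterion \eqref{Fc}, the smash--hom adjunction for derived function spectra, and the suspension shifts of Remark~\ref{susp_colocal} --- but the paper organizes them more efficiently, and in a way that directly dissolves the obstacle you flag at the end.

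The paper does not attempt to prove the intermediate claim that $F^{c}(Y,f)$ is a $\mathcal{K}(n)$-colocal equivalence. Instead it \emph{shifts first}: writing $X\wedge Y\simeq \Sigma^{-n}X\wedge \Sigma^{n}Y$, one has $\Sigma^{-n}X$ connective (i.e.\ $\mathcal{K}(0)$-colocal) and $\Sigma^{n}Y$ is $\mathcal{K}(n+m)$-colocal. Then for any $\mathcal{K}(n+m)$-colocal equivalence $f\colon A\to B$ the map $F^{c}(\Sigma^{n}Y,A)\to F^{c}(\Sigma^{n}Y,B)$ is already a \emph{weak equivalence} by \eqref{Fc}, not merely a colocal equivalence, so applying $F^{c}(\Sigma^{-n}X,-)$ preserves it for trivial reasons. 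This bypasses your reduction entirely.

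The shift also resolves your stated worry about the adjunction $F^{c}(X\wedge Y,A)\simeq F^{c}(X,F^{c}(Y,A))$: once the left factor $W=\Sigma^{-n}X$ is connective, one checks for $k\ge 0$ that $\pi_{k}F(W,F^{c}(Z,A))\cong[\Sigma^{k}W,F^{c}(Z,A)]\cong[\Sigma^{k}W,F(Z,A)]\cong\pi_{k}F(W,F(Z,A))$, since $\Sigma^{k}W$ is connective and the connective-cover map $F^{c}(Z,A)\to F(Z,A)$ is a $\mathcal{K}(0)$-colocal equivalence; taking connective covers then gives the desired identification. In your ordering (adjunction before shift) this step is not available, because $X$ need not be connective --- so as written your argument has exactly the gap you anticipated. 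Reordering to shift first, as the paper does, closes it and also shortens the proof considerably. Your instinct about the indexing in the ``in particular'' clause is sound; that is purely bookkeeping and the paper does not spell it out either.
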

\begin{proof}
Let $A\rightarrow B$ be any $\mathcal{K}(n+m)$-colocal equivalence. By Remark~\ref{susp_colocal}, the spectrum $\Sigma^{-n}X$ is $\mathcal{K}(0)$-colocal. Then
\begin{multline}\notag
F^c(X\wedge Y, A)\simeq F^c(\Sigma^{-n}X\wedge \Sigma^n Y, A)\simeq F^c(\Sigma^{-n} X, F^c(\Sigma^n Y, A)) \\ \notag
\simeq F^c(\Sigma^{-n} X, F^c(\Sigma^n Y, B))\simeq F^c(\Sigma^{-n}X\wedge \Sigma^n Y, B)\simeq F^c(X\wedge Y, B).
\end{multline}
The equivalence $F^c(\Sigma^n Y, A)\simeq F^c(\Sigma^n Y, B)$ holds since $\Sigma^{n}Y$ is $\mathcal{K}(n+m)$-colocal by Remark~\ref{susp_colocal} and $A\rightarrow B$ is a $\mathcal{K}(n+m)$-colocal equivalence.
\end{proof}

An $A_{\infty}$ ring spectrum is an algebra over a cofibrant replacement of the associative operad $\mathcal{A}{\rm ss}$ in simplicial sets. An $E_{\infty}$ ring spectrum is an algebra over a cofibrant replacement of the commutative operad $\mathcal{C}{\rm om}$ in simplicial sets.

\begin{thm}
Let $R$ be an $A_{\infty}$ or $E_{\infty}$ ring spectrum with a fibrant underlying spectrum. Then, the connective cover $K_0 R$ has a homotopy unique $A_{\infty}$ or $E_{\infty}$ ring spectrum structure such that the colocalization map $K_0 R\rightarrow R$ is a map of $A_{\infty}$ or $E_{\infty}$ ring spectra.
\end{thm}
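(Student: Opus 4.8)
The plan is to recognise the connective cover $K_0$ as an enriched Bousfield colocalization and to read the statement off from Theorem~\ref{main_coloc}. First I would fix the data: take $\V=\sSets_*$ and $\M=\Sp$ with its standard model structure, which is a monoidal $\sSets_*$-model category whose unit $S^0$ (the sphere spectrum) is cofibrant; take the one-point colour set $C=\{\ast\}$, and let $\mathcal{P}$ be a cofibrant replacement of $\Ass$ (resp.\ of $\Com$) in operads in simplicial sets, so that $\mathcal{P}$-algebras in $\Sp$ are precisely $A_\infty$ (resp.\ $E_\infty$) ring spectra. With $\mathcal{K}=\mathcal{K}(-1)=\{\Sigma^0 S^0\}=\{S^0\}$, the enriched $\mathcal{K}$-colocalization functor on $\Sp$ is exactly the connective cover $K_0$, and its coaugmentation at $R$ is the colocalization map $\varepsilon_R\colon K_0R\to R$. (The enriched colocalization $\Sp^{\mathcal{K}}$ exists since $\Sp$ is right proper and combinatorial and $\sSets_*$ is combinatorial.)

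Next I would verify the hypotheses of Theorem~\ref{main_coloc} for the $\mathcal{P}$-algebra $\mathbf{X}=R$. (i) The underlying spectrum of $R$ is fibrant, by assumption. (ii) The unit $I=S^0$ is $\mathcal{K}(-1)$-colocal, being cofibrant and an element of $\mathcal{K}(-1)$; this is the only place where the arity-zero operations $\mathcal{P}(\,;\ast)\neq 0$ of the unital operad $\mathcal{P}$ enter. (iii) For every $n\ge 1$ one must check that the $n$-fold smash power $(K_0R)^{\wedge n}$ is $\mathcal{K}(-1)$-colocal; since every operation space of $\mathcal{P}$ is nonzero, there is no restriction on the tuple $(c_1,\ldots,c_n;c)$. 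Here I would note that $K_0R$ is cofibrant (the colocalization functor is a cofibrant replacement in $\Sp^{\mathcal{K}}$, and $\mathcal{K}$-colocal objects are cofibrant in $\Sp$) and connective; as smash products of cofibrant spectra are cofibrant and smash products of connective spectra are connective, $(K_0R)^{\wedge n}$ is again cofibrant and connective, hence of the form $K_0Z$, and therefore $\mathcal{K}(-1)$-colocal (cf.\ Lemma~\ref{wedge_colocal}).

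With (i)--(iii) established, Theorem~\ref{main_coloc} gives a homotopy unique $\mathcal{P}$-algebra structure on $K_0R$ for which $\varepsilon_R\colon K_0R\to R$ is a map of $\mathcal{P}$-algebras; unwinding the identification of $\mathcal{P}$-algebras with $A_\infty$ (resp.\ $E_\infty$) ring spectra, this is precisely the claim. The two cases are handled identically, the only difference being the choice of the cofibrant operad $\mathcal{P}$.

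The one step that is not mere unwinding of definitions --- and hence the part I expect to be the crux --- is (iii), the closure condition~(\ref{condition02}): that smash powers of $K_0R$ stay $\mathcal{K}(-1)$-colocal. For a general colocalization this property can fail (which is exactly why Theorem~\ref{main_coloc} carries it as an explicit hypothesis); what makes the connective-cover case work is the elementary fact that a smash product of connective spectra is connective, together with the identification of the $\mathcal{K}(-1)$-colocal spectra with the cofibrant connective ones. Everything else is the bookkeeping required to fit the enriched colocalization of $\Sp$ over $\sSets_*$ into the framework of Theorem~\ref{main_coloc}.
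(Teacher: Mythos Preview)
Your proposal is correct and follows essentially the same approach as the paper: identify $K_0$ as the enriched $\mathcal{K}(-1)$-colocalization, verify the hypotheses of Theorem~\ref{main\_coloc} (fibrancy of $R$, colocality of the unit $S^0$, and closure of $(K_0R)^{\wedge n}$ under colocality via Lemma~\ref{wedge\_colocal}), and conclude. Your write-up is in fact more explicit than the paper's on two points: you correctly index the colocalization as $\mathcal{K}(-1)$ (the paper's proof writes ``$\mathcal{K}(0)$'', which appears to be a slip), and you spell out why $S^0$ is $\mathcal{K}(-1)$-colocal rather than merely cofibrant, which is what Theorem~\ref{main\_coloc} actually requires for the arity-zero condition.
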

\begin{proof}
By Lemma~\ref{wedge_colocal}, we have that $K_0X\otimes\stackrel{(n)}{\cdots} \otimes K_0X$ is $\mathcal{K}(0)$-colocal for every $n\ge 1$. For $n=0$ we know that the sphere spectrum is cofibrant in the standard model structure of symmetric spectra. The result now follows from Theorem~\ref{main_coloc} since $A_{\infty}$ and $E_{\infty}$ are cofibrant operads.
\end{proof}

Let $\mathcal{P}$ be a one coloured operad. Then, there is a $C$-coloured operad $\mathcal{M}{\rm od}_{\mathcal{P}}$ with two colours $C=\{r,m\}$ whose algebras are pairs $(R,M)$ where $R$ is a $\mathcal{P}$\nobreakdash-algebra and $M$ is a (left) $\mathcal{P}$-module. An $A_{\infty}$ or $E_{\infty}$ module (over an $A_{\infty}$ or $E_{\infty}$ ring spectrum) is an algebra over a cofibrant resolution of the 2-coloured operad $\mathcal{M}{\rm od}_{\mathcal{A}{\rm ss}}$ or $\mathcal{M}{\rm od}_{\mathcal{C}{\rm om}}$, respectively; see~\cite[\S2.1 and \S5.1]{CGMV10} for a description of these coloured operads.

\begin{thm}
Let $(R,M)$ be an $A_{\infty}$ or $E_{\infty}$ module where $R$ and $M$ are fibrant as spectra. Then $(K_0 R, K_n M)$ has a homotopy unique $A_{\infty}$ or $E_{\infty}$ module structure such that the colocalization map is a map of $A_{\infty}$ or $E_{\infty}$ modules, for every $n\in \Z$.
\end{thm}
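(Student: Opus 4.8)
The plan is to apply Theorem~\ref{mainthm_coideal} to the $2$-coloured operad $\M{\rm od}_{\Ass}$ or $\M{\rm od}_{\Com}$ with $C=\{r,m\}$, choosing the coideal appropriately so that the ring colour $r$ gets the colocalization functor $K_0$ while the module colour $m$ gets $K_n$. First I would observe that $\{r\}$ is a coideal relative to $\M{\rm od}_{\mathcal{P}}$: equivalently $J=\{m\}$ is an ideal, which holds because in the operad for modules the output colour is $m$ only when at least one input colour is $m$ (a module element is produced only from module inputs, together with ring inputs), so $\mathcal{P}(c_1,\dots,c_n;m)=0$ can never fail the ideal condition — whenever the output is $m$ and $n\ge 1$, some $c_i$ equals $m$. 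Thus colocalizing away from the coideal $C\setminus J=\{r\}$ means applying the identity on the $r$-component... but that is the wrong way around for what we want, so instead I would work with the \emph{ideal} version from Remark~\ref{colocal_ideals}: take the ideal $J=\{m\}$, apply $K_0$ on the colour $r\notin J$ and (after a shift) $K_n$ on the colour $m\in J$, and use that $M$ is required to be fibrant to supply the extra cofibrancy hypothesis of that remark.

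The subtlety is that Theorem~\ref{mainthm_coideal} and Remark~\ref{colocal_ideals} are stated for a \emph{single} colocalization functor $K$ applied uniformly, whereas here we want $K_0$ on the ring and $K_n$ on the module. I would handle this by the observation in Remark~\ref{susp_colocal}: a spectrum is $\mathcal{K}(n)$-colocal if and only if its $(-n)$-fold suspension is $\mathcal{K}(0)$-colocal, so after suspending the module component we may take $\mathcal{K}=\mathcal{K}(0)$ throughout and recover $K_n M$ from $\Sigma^{-n}$ of the $\mathcal{K}(0)$-colocalization of $\Sigma^n M$. Concretely, the colocalization-away-from-an-ideal machinery needs, for each admissible profile $(c_1,\dots,c_n;c)$ of $\M{\rm od}_{\mathcal{P}}$ with $c=r$ (the only case not handled by the identity map, since $c=m$ lies in the ideal and is left alone in the coideal formulation, or conversely), that the tensor product $K_{c_1}X(c_1)\otimes\cdots\otimes K_{c_n}X(c_n)$ be $\mathcal{K}(0)$-colocal. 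For output colour $r$, all inputs are the ring colour $r$ (the operad has no ring output from module inputs), so this tensor is $K_0R\wedge\cdots\wedge K_0R$, which is $\mathcal{K}(0)$-colocal by Lemma~\ref{wedge_colocal}; the case $n=0$ uses that the sphere spectrum is cofibrant, i.e. the unit is $\mathcal{K}(0)$-colocal as needed for $\mathcal{P}(\,;r)\ne 0$.

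Then I would treat the module output colour $c=m$: here $\mathcal{P}(c_1,\dots,c_n;m)\ne0$ forces exactly one input (say the last) to be $m$ and the rest to be $r$, so the relevant tensor is $K_0R\wedge\cdots\wedge K_0R\wedge K_nM$. By Lemma~\ref{wedge_colocal}, since each $K_0R$ is $\mathcal{K}(0)$-colocal and hence in particular connective and cofibrant, the smash $K_0R\wedge K_n M$ is $\mathcal{K}(n)$-colocal; iterating, the whole tensor is $\mathcal{K}(n)$-colocal. After the suspension normalization above this says exactly that $\Sigma^{-n}$ of it is $\mathcal{K}(0)$-colocal, which is the hypothesis required by the ideal-version of Theorem~\ref{mainthm_coideal}. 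With all the colocality hypotheses verified, the theorem produces a homotopy-unique $A_\infty$ (resp. $E_\infty$) module structure on $(K_0R,K_nM)$ for which the colocalization map is a map of modules, using that $\M{\rm od}_{\Ass}$ and $\M{\rm od}_{\Com}$ admit cofibrant resolutions. The main obstacle is bookkeeping: correctly matching the connectivity grading of Lemma~\ref{wedge_colocal} to the colour-indexed colocalization functors $K_0$ and $K_n$, and checking that the ideal/coideal formalism of Section~6 genuinely covers the case of two \emph{different} functors on the two colours rather than one — which is why the suspension trick of Remark~\ref{susp_colocal}, reducing everything to the single family $\mathcal{K}(0)$, is the linchpin of the argument.
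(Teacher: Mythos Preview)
Your proposal contains a genuine error in the ideal/coideal identification. You claim that $J=\{m\}$ is an ideal because ``whenever the output is $m$ and $n\ge 1$, some $c_i$ equals $m$''. But that is not the ideal condition: $J$ is an ideal if $\mathcal{P}(c_1,\ldots,c_n;c)=0$ whenever $c\in J$ and \emph{some} $c_i\notin J$. The module action $\mathcal{P}(r,m;m)\ne 0$ has output $m\in J$ and an input $r\notin J$, so $\{m\}$ is \emph{not} an ideal. The correct picture (as stated in the paper) is the opposite: $\{r\}$ is the ideal and $\{m\}$ is the coideal, since ring outputs in $\Mod_{\mathcal{P}}$ arise only from all-ring inputs. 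Once this is reversed, your attempt to run the argument in a single pass with ideal $\{m\}$ collapses.

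Beyond this, the suspension trick you propose --- replacing $M$ by $\Sigma^n M$ so as to use the single functor $K_0$ on both colours, then desuspending --- is not justified within the framework of Section~6. Theorem~\ref{mainthm_coideal} and Remark~\ref{colocal_ideals} concern the extension of one colocalization functor away from a subset; they do not accommodate two different functors on two colours, and you have not shown that suspending one component of a $\Mod_{\mathcal{P}_\infty}$-algebra and then desuspending after colocalization respects the algebra structure. The paper avoids all of this by a clean two-step argument: first apply Theorem~\ref{mainthm_coideal} with the coideal $\{m\}$ and the functor $K_0$ to obtain the module $(K_0R,M)$ (here the colocality checks involve only smash powers of $K_0R$, handled by Lemma~\ref{wedge_colocal}); then, since $K_0R$ is now both fibrant and cofibrant, invoke Remark~\ref{colocal_ideals} with the ideal $\{r\}$ and the functor $K_n$ to obtain $(K_0R,K_nM)$ (here the relevant tensors are $K_0R\wedge\cdots\wedge K_0R\wedge K_nM$, which are $\mathcal{K}(n)$-colocal by Lemma~\ref{wedge_colocal}). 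This two-pass strategy is the missing idea.
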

\begin{proof}
The $C$-coloured operad for modules over monoids has two colours $C=\{r,m\}$. The set $\{r\}$ is an ideal and the set $\{m\}$ is a coideal for this coloured operad. Applying Theorem~\ref{mainthm_coideal} using the colocalization functor $K_0$ and the coideal $\{m\}$ we obtain that $(K_0 R, M)$ is an $A_{\infty}$ or $E_{\infty}$ module. We may now use Remark~\ref{colocal_ideals} (note that $K_0 R$ is both fibrant and cofibrant as a spectrum) and apply Theorem~\ref{mainthm_coideal} with the colocalization functor $K_n$ and the ideal $\{r\}$ to the pair $(K_0 R, M)$, yielding the desired result.
\end{proof}

\end{document}